\newcommand{\comments}[1]{}
\newtheorem{theorem}{Theorem}
\newtheorem{definition}{Definition}
\newcommand{\WTDP}{WTDP}
\newcommand{\TDP}{TDP}
\newcommand{\UFL}{UFL}
\newcommand{\maone}{(MA1)}
\newcommand{\matwo}{(MA2)}
\newcommand{\mathree}{(MA3)}
\newcommand{\ffirst}{(F1)}
\newcommand{\fsecond}{(F2)}
\newcommand{\MAINST}{\texttt{MA}}
\newcommand{\NEWINST}{\texttt{NEW}}
\renewcommand{\paragraph}[1]{{\vskip2mm\noindent \bf #1}\hspace{0.4cm}}
\definecolor{purple}{RGB}{153,50,204}
\DeclareMathOperator*{\argmin}{arg\,min}
\begin{document}

%\begin{frontmatter}

%\title{Branch-and-cut algorithms and a genetic algorithm for the weighted total domination problem}

\title{Exact and heuristic algorithms for the weighted total domination problem}

\author[1,2]{Eduardo \'Alvarez-Miranda\thanks{ealvarez@utalca.cl}}

\author[3]{Markus Sinnl\thanks{markus.sinnl@jku.at}}

\affil[1]{Department of Industrial Engineering, Faculty of Engineering, Universidad de Talca, Campus Curic\'o, Chile}
\affil[2]{Instituto Sistemas Complejos de Ingenier\'ia ISCI, Chile}

\affil[3]{Institute of Production and Logistics Management, Johannes Kepler University Linz, Linz, Austria}

\date{}
\maketitle

\begin{abstract}
Dominating set problems are among the most important class of combinatorial problems in graph optimization,
from a theoretical as well as from a practical point of view. In this paper, we address the recently introduced (minimum) weighted total domination problem. 
In this problem, we are given an undirected graph with a vertex weight function and an edge weight function. 
The goal is to find a total dominating set $D$ in this graph with minimal weight. 
A total dominating set $D$ is a subset of the vertices such that every vertex in the graph, including vertices in $D$, is adjacent to a vertex in $D$. 
The weight is measured as the sum of all vertex weights of vertices in $D$, plus the edge weights in the subgraph induced by $D$, plus for each vertex not in $D$ the minimum weight of an edge from it to a vertex in $D$. 

In this paper, we present two new Mixed-Integer Programming models for the problem, and design solution frameworks based on them.
These solution frameworks also include valid inequalities, 
starting heuristics and primal heuristics. 
In addition, we also develop a genetic algorithm, which is based on a greedy randomized adaptive search procedure version of our starting heuristic. 

We carry out a computational study to assess the performance of our approaches when compared to the previous work
for the same problem. 
The study reveals that our exact solution algorithms are up to 500 times faster compared to previous exact approaches and instances with up to 125 vertices can be solved to optimality within a timelimit of 1800 seconds. 
Moreover, the presented genetic algorithm also works well and often finds the optimal or a near-optimal solution within a short runtime. Additionally, we also analyze the influence of instance-characteristics on the performance of our algorithms.
\end{abstract}

\section{Introduction and motivation}

\emph{Dominating set problems} are among the most important class of combinatorial problems in graph optimization,
from a theoretical as well as from a practical point of view. 
For a given graph $G = G(V,E)$, a subset $D\subset V$ of vertices is referred to as a \textit{dominating set} if
the remaining vertices, i.e., $V\setminus D$, are \textit{dominated} by $D$ according to a given topological relation (e.g., they are all adjacent to at least one vertex from $D$).
Dominating set problems (also often called \emph{domination problems} in graphs)  have attracted the attention of computer scientists and applied mathematicians
since the early 50s, and their close relation to covering and independent set problems has lead to the development of a whole research area (see, e.g.,~\cite{Ore1962} and~\cite{doi:10.1002/net.3230070305} for early references on domination problems).

There are many applications where set domination and related concepts play a central role, including school bus routing~\cite{7502983}, communication networks~\cite{wan2002distributed}, radio station location~\cite{erwin2004dominating}, social networks analysis~\cite{SunMa2017}, biological networks analysis~\cite{NACHER201657}, and also chess-problems like the five-queens problem~\cite{rolfes2014copositive}; 
see e.g., the book~\cite{haynes2013fundamentals} for a comprehensive overview of domination problems. 
Variants of dominating set problems include e.g., the \emph{connected dominating set problem}~\cite{DuWan2013}, the \emph{(weighted) independent dominating set problem}~\cite{GODDARD2013839,PINACHODAVIDSON2018860}, among others (see,~e.g.,~\cite{Kang2013} for further variants of the dominating set problems).

%for example the design of wireless ad-hoc networks~\cite{BOUAMAMA201657}, the analysis of biological and social networks~\cite{NACHER201657,SunMa2017} just to name a few.
%According to~\cite{BOUAMAMA201657}, finding dominating sets within graphs can be used,
%for instance in the design and analysis of wireless ad-hoc networks,
%or for query-focused multi-document summarization. 
%Likewise, according to~\cite{WangEtal2017} set domination problems can be used
%in wavelength division protocols of multiplexing optical networks and in network intrusion detection.
%As pointed out in~\cite{PINACHODAVIDSON2018860}, further applications can be found in social network analysis~\cite{SunMa2017} and for studying biological network~\cite{NACHER201657}.
%In the case of independent sets, in~\cite{JIN201520} the authors identify applications
%in coding theory, information retrieval, signal transmission, classification theory, and experimental design, among others, for maximum dominating sets; while for minimum dominating sets, 
%applications can be found in the context of clustering approaches within wireless networks~\cite{Wang2017},
%or as part virtual backbone generation in mobile wireless ad-hoc networks is mentioned~\cite{YanEtAl2007}.

%such as the minimum connected dominating set problem[5], the minimum weighted dominating set problem
%[6], the total domination problem[7]. For more variations of dominating sets, the readers can see [8–14].

In this paper, we address the recently introduced \emph{(minimum) weighted total domination problem (\WTDP)} which is defined as follows. 

\begin{definition}Let $\mathbf{w}: V\rightarrow \mathbb{R}_{\geq 0}^{|V|}$ be a vertex weight function,
and let $\mathbf{c}: E\rightarrow \mathbb{R}_{\geq 0}^{|E|}$  be an edge weight function.
The weighted total domination problem is the problem of finding a set $D\subset V$, 
such that every vertex in $V$ (including the vertices from $D$) has at least one neighbor in $D$
and the function
\begin{align}
w(D) = \sum_{i\in D} w_i + \sum_{e\in E(D)} c_e + \sum_{i \in V\setminus D} \min \{c_e\mid e:\{i,j\}\in E\;\mbox{and}\;j\in N(i)\cap D\} \notag
\end{align}is minimized, where $E(D)\subseteq E$ corresponds to the set of edges \emph{inside} $D$,
and $N(i)\subset V$ corresponds to the set of neighboring vertices of vertex $i\in V$.
\end{definition}

For referring to the different components of the objective function, we denote $\sum_{i\in D} w_i$ as the \emph{vertex selection costs}, $\sum_{e\in E(D)} c_e$ as the \emph{internal edge costs} and $\sum_{i \in V\setminus D} \min \{c_e\mid e:\{i,j\}\in E\;\mbox{and}\;j\in N(i)\cap D\}$ as the \emph{external edge costs}.
Figure~\ref{fig:example} gives an exemplary instance of the \WTDP, together with its optimal solution.
\tikzstyle{vertex}=[circle,fill=black!15,minimum size=20pt,inner sep=0pt]
\tikzstyle{edge} = [draw,thick,-]
\tikzstyle{weight} = [draw=none,fill=white,inner sep=1pt, font=\small]
\tikzstyle{selected edge} = [draw,line width=2pt,-,black!100]
\tikzstyle{unselected vertex}=[circle,fill=black!5,minimum size=20pt,inner sep=0pt]
\begin{figure}[h!tb]
	\begin{subfigure}[b]{.5\linewidth}
		\centering
		\begin{tikzpicture}[scale=1.9]
		\foreach \pos/\name/\type/\weight in {{(0,2)/A/vertex/1}, {(1,2)/B/vertex/8}, {(2,2)/C/vertex/1},{(0,1)/D/vertex/5},{(1,1)/E/vertex/1},{(2,1)/F/vertex/7},{(0,0)/G/vertex/1},{(1,0)/H/vertex/9},{(2,0)/I/vertex/1}}
		\node[\type] (\name) at \pos {$\weight$};
		\foreach \source/ \dest/\weight in  
		{A/B/6,B/C/7,A/D/2,D/E/5,B/E/3,C/F/3,E/F/3,D/G/3,G/H/2,E/H/6,H/I/2,F/I/4}
		\path[edge] (\source) -- node[weight] {$\weight$} (\dest);

		\end{tikzpicture}
		\caption{Instance}\label{fig:instance}
	\end{subfigure}%
	\begin{subfigure}[b]{.5\linewidth}
		\centering
		\begin{tikzpicture}[scale=1.9]
	\foreach \pos/\name/\type/\weight in {{(0,2)/A/unselected vertex/}, {(1,2)/B/unselected vertex/}, {(2,2)/C/vertex/1},{(0,1)/D/vertex/5},{(1,1)/E/unselected vertex/},{(2,1)/F/vertex/7},{(0,0)/G/vertex/1},{(1,0)/H/unselected vertex/},{(2,0)/I/unselected vertex/}}
\node[\type] (\name) at \pos {$\weight$};

\foreach \source/ \dest/\weight in  
{A/D/2,B/C/7,C/F/3,E/F/3,D/G/3,F/I/4,G/H/2}
\path[selected edge] (\source) -- node[weight] {$\weight$} (\dest);

		\end{tikzpicture}

		\caption{Optimal solution}\label{fig:solution}
	\end{subfigure}
	\caption{Instance $I=(G=(V,E,\mathbf{c},\mathbf{w})$ and optimal solution with weight $14+6+18=38$ (\emph{vertex selection costs}+\emph{internal edge costs}+\emph{external edge costs}). We note that a solution consisting of all the vertices with weight one would not be feasible, as it is not a total dominating set, but only a dominating set. \label{fig:example}}
\end{figure}
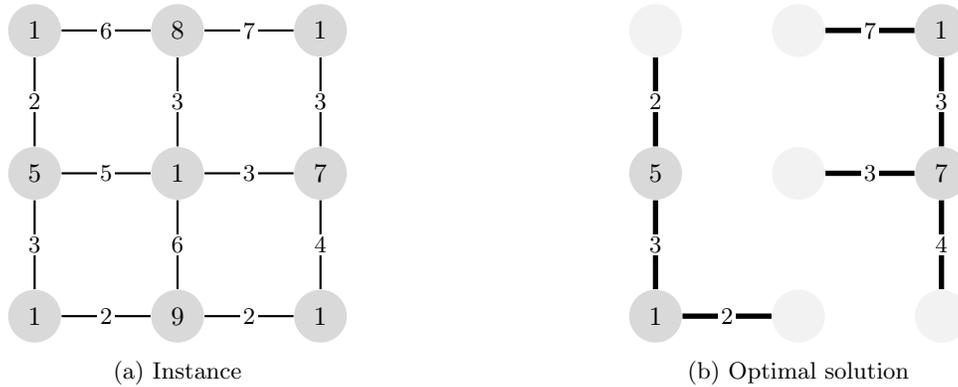

We note that in the \WTDP, we are not just concerned with the concept of domination, but with the stronger concept of \emph{total domination}, which imposes that for each vertex $v \in D$, there is also a neighbor of $v$ in $D$ (i.e., the vertices of $D$ also need to be dominated by $D$). 
The \WTDP\ was introduced in~\cite{MaEtAl2019} an is an extension of the (unweighted) total domination problem (\TDP), resp., the vertex-weighted total domination problem. 
In the \TDP, the objective function has $w_i=1$ for all $i \in V$, and $c_e=0$ for all $e \in E$. 
The optimal solution of the \TDP\ for a given graph is called its \emph{total domination number}. 
The \TDP\ was introduced in the 1980s (see~\cite{cockayne1980total}) 
and is NP-hard in general graphs (see~\cite{laskar1984algorithmic}, for further details). 
The \TDP\ has a rich history of research focusing on theoretical results, e.g., computational complexity and bounds for the domination number for certain graph classes, we refer the reader to the survey~\cite{henning2009survey} and the book~\cite{henning2013total} for more details. 
Applications of total domination include the design of communication networks and the forming of committees~\cite{haynes2013fundamentals, henning2004restricted}.

\paragraph{Contribution and Outline}
The \WTDP\ was recently introduced in~\cite{MaEtAl2019}, where three Mixed-Integer Programming (MIP) formulations to solve the problem were presented and evaluated in a computational study.
In this paper, we present two new MIP models for the problem, and design solution frameworks based on them.
These solution frameworks also include valid inequalities, 
starting heuristics and primal heuristics. A genetic algorithm (GA) is also developed,
which is based on a greedy randomized adaptive search procedure (GRASP) version of our starting heuristic.
We carry out a computational study to assess the performance of our new approaches in comparison to the previous work by~\cite{MaEtAl2019}. 
The study reveals that our algorithms are up to 500 times faster and instances with up to 125 vertices can be solved to optimality within a timelimit of 1800 seconds. Moreover, the presented heuristics (i.e., the GA, and just using the GRASP on its own) also work well and often find the optimal, or a near-optimal solution within a short runtime. 
Furthermore, we also analyze the influence of instance-characteristics on the performance of our algorithms.

The paper is organized as follows. In the reminder of this section, we give a short overview of the models introduced in~\cite{MaEtAl2019}. 
In Section~\ref{sec:form} we present our two new MIP models, together with valid inequalities. 
In Section~\ref{sec:bc} we discuss implementation details of the branch-and-cut algorithms we designed based on our new models, including a description of the starting and primal heuristics. 
In Section~\ref{sec:genalg}, we describe our genetic algorithm.
Section~\ref{sec:compres} contains our computational study, and concluding remarks are provided in Section~\ref{sec:con}.

\subsection{Revisiting the models of~\cite{MaEtAl2019}}
\label{subsec:prevwork}

In the following, we give a brief overview of the three formulations for the \WTDP\ presented in~\cite{MaEtAl2019} (we denote them as \maone, \matwo, \mathree). We re-implemented these models and included them in our computational study, see Section~\ref{sec:compres}.   

Firstly, consider the following set of variables and constraints which are common to all formulations of~\cite{MaEtAl2019} and that will also be part of our formulations. 
Let $\mathbf{x}\in\{0,1\}^{|V|}$ be a vector of binary variables, such that $x_i = 1$ if vertex $i\in V$ is taking as part of the (total) dominating set, and $x_i =0$ otherwise.
Constraints
\begin{equation}
\sum_{j\in N(i)} x_j \geq 1,\;\forall i\in V,\tag{TDOM}\label{eq:dom}
\end{equation}
ensure that the variables with $x_i=1$ form a total dominating set. We observe that these constraints are already enough to define the set of feasible solutions. The remaining constraints in the presented models are used to correctly measure the objective function.
Let $\mathbf{y}\in\{0,1\}^{|E|}$ be a vector of binary variables
associated with the edges $E$. 
These variables will be used in all formulations except of \mathree, and they are used differently depending on the considered formulation.
In \maone, \matwo, they are used to measure the contribution of any edge $e=\{i,j\}$ on the objective function, for both the internal edge costs and the external edge costs. In contrast, in the new formulations presented in Section~\ref{sec:form}, these variables are only used for the internal edge costs, and the external edge costs are modeled in different ways. %such that $y_{e} = 1$ if edge $e$ connects two nodes from the total dominating set, and $y_e =0$ otherwise.
\paragraph{Formulation \maone} 
Let $\mathbf{z}\in\{0,1\}^{|E|}$ be a vector of binary variables, 
such that $z_{e=\{i,j\}} = \min\{x_i,x_j\}$ for every edge $e:\{i,j\}\in E$.
For a given vertex $i\in V$, let $\delta(i)\subset E$ be the set of edges incident to $i$.
Formulation \maone~ is given by:
\begin{align}
\mbox{\maone}\quad\quad\quad w^* = \min &\sum_{i\in V} w_i x_i+ \sum_{e\in E} c_e y_e  \tag{WTD1.1}\label{eq:wtd11}\\
\mbox{s.t.}\quad\quad & \eqref{eq:dom} \notag \\
&x_i + x_j \geq y_e,\;\forall e:\{i,j\}\in E\tag{WTD1.3}\label{eq:wtd13}\\
&x_i \geq z_e\;\mbox{and}\; x_j\geq z_e,\;\forall e:\{i,j\}\in E\tag{WTD1.4}\label{eq:wtd14}\\
&z_e \geq x_i + x_j - 1,\;\forall e:\{i,j\}\in E\tag{WTD1.5}\label{eq:wtd15}\\
&y_e \geq z_e,\;\forall e\in E\tag{WTD1.6}\label{eq:wtd16}\\
&x_i + \sum_{e\in\delta(i)} y_e \geq 1,\;\forall i\in V\tag{WTD1.7}\label{eq:wtd17}\\
&\mathbf{x}\in\{0,1\}^{|V|},\;\mathbf{y}\in\{0,1\}^{|E|}\;\mbox{and}\;\mathbf{z}\in\{0,1\}^{E} \notag
\end{align}

\paragraph{Formulation \matwo}
Let $M$ be a large constant (e.g., the maximum vertex-degree of a given instance).
Compared to formulation \maone, \matwo\ gets rid of the $\mathbf{z}$-variables with the help of big-M-type constraints. 
Formulation \matwo~ is given by:
\begin{align}
\mbox{\matwo}\quad\quad\quad w^* = &\min \sum_{i\in D} w_i x_i+ \sum_{e\in E(D)} c_e y_e \notag\\
\mbox{s.t.}\quad\quad &\mbox{\eqref{eq:dom},~\eqref{eq:wtd13} and~\eqref{eq:wtd17}}\tag{WTD2.2}\label{eq:wtd22}\\
&y_e \leq x_i + x_j - 1,\;\forall e:\{i,j\}\in E\tag{WTD2.3}\label{eq:wtd23}\\
&\sum_{e\in\delta(i)} y_e \leq 1 + M x_i,\;\forall i\in V\tag{WTD2.4}\label{eq:wtd24}\\
&\mathbf{x}\in\{0,1\}^{|V|}\;\mbox{and}\;\mathbf{y}\in\{0,1\}^{|E|} \notag
\end{align}

\paragraph{Formulation \mathree} Finally, formulation \mathree\ also gets rid of the binary $\mathbf{y}$-variables, with the help of integer variables $\mathbf{q}\in\{0,1,\ldots, |V|L\}^{|V|}$, where $L$ is a large constant, e.g., the maximum edge weight of a given instance. These variables measure for each $i \in V$ twice contribution to the objective of all the edge-weights of edges adjacent to $i$ (again, for both the internal and external edge costs).
Formulation \mathree~ is given by:
\begin{align}
\mbox{\mathree}\quad\quad\quad w^* = & \sum_{i\in V} \left(w_i x_i + \frac{1}{2}\cdot q_i\right)\tag{WTD3.1}\label{eq:wtd31}\\
\mbox{s.t.}\quad\quad & \eqref{eq:dom} \notag \\
 &q_i \geq 2\left(c_e x_i - L x_i - \sum_{e':\{i,j'\}\in E\mid_{c_{e'}\leq c_e}} Lx_{j'} \right),\;\forall e:\{i,j\}\in E,\;\forall i\in V\tag{WTD3.2}\label{eq:wtd32}\\
&q_i \geq \sum_{e:\{i,j\}\in E}c_e\left(x_i + x_j -1\right),\;\forall i\in V\tag{WTD3.3}\label{eq:wtd33}\\
&\mathbf{x}\in\{0,1\}^{|V|}\;\mbox{and}\;\mathbf{q}\in\{0,1,\ldots, |V|L\}^{|V|} \notag
\end{align}

\section{Two new Mixed-Integer Programming formulations for the \WTDP}
\label{sec:form}

%As shown in~\cite{MaEtAl2019}, formulations WTD-1, WTD-2 and WTD-3 provide different ...
In this section we present two alternative formulations that, as we show in Section \ref{sec:compres},
allow the design of algorithmic strategies that outperform the results presented in~\cite{MaEtAl2019}.

\subsection{Formulation \ffirst\ and valid inequalities}

Let $\mathbf{x}\in\{0,1\}^{|V|}$ be defined as before, %with $x_i=1$ iff $i \in D$
and $\mathbf{y}\in\{0,1\}^{|E|}$ be such that $y_{e:\{i,j\}}=1$ if $x_i = 1$ and $x_j=1$,
and $y_{e:\{i,j\}}=0$, otherwise, for every edge $e:\{i,j\}\in E$.
Let $A = \{(i,j)\cup(j,i)\mid \forall e:\{i,j\}\in E \}$
be the set of bi-directed arcs associated with $E$,
and let $c_{ij} = c_{ji} = c_{e}$ for all $e\in E$.
In contrast to \maone, we associate $\mathbf{z}$ with these directed arcs, instead of the undirected edges. Let $z_{ij} = 1$ if vertex $j\in V$ is adjacent to the dominating set vertex $i$ through arc $(i,j)\in A$, and $z_{ij} =0$ otherwise. These variables are used to measure the external edge costs. 
By using such strategy, the resulting formulation resembles to the formulations of the well-known uncapacitated facility location problem (\UFL);
we can interpret the set of vertices $j \in D$ as open facilities, we want the vertices $i \in V\setminus D$ be assigned to the facility with the cheapest assignment cost (see, e.g., \cite{fischetti2016redesigning,laporte2015location} for recent references on the \UFL). Let $\delta^-(i)$ and $\delta^+(i)$ correspond to the set of \textit{incoming} and \textit{outgoing} arcs from and to vertex $i\in V$, respectively.

Using this notation, the WTDP can be formulated as follows:
\begin{align}
\mbox{\ffirst}\quad\quad\quad w^* = &\min \sum_{i\in V} w_i x_ i + \sum_{e\in E} c_e y_e + \sum_{(i,j)\in A} c_{ij} z_{ij}\notag\\
\mbox{s.t.}\quad\quad & \eqref{eq:dom} \notag \\
& x_i + \sum_{(j,i)\in\delta^-(i)} z_{ji} = 1,\;\forall i\in V\tag{XZLINK1}\label{eq:wtd42}\\
& z_{ij} \leq x_i,\;\forall (i,j)\in A \tag{XZLINK2}\label{eq:wtd43}\\
&y_e\geq x_i + x_j - 1,\;\forall e:\{i,j\}\in E\tag{YZLINK}\label{eq:wtd44}\\
&\mathbf{x}\in\{0,1\}^{|V|},\;\mathbf{y}\in\{0,1\}^{|E|}\;\mbox{and}\;\mathbf{z}\in\{0,1\}^{|A|}.  \notag
\end{align}Constraints \eqref{eq:wtd42} ensure for each $i \in V$, that either $i \in D$, or that it is covered by a $j \in D$. Constraints \eqref{eq:wtd43} link the $\mathbf{z}$-variables and $\mathbf{x}$-variables. Together with the $\sum_{(i,j)\in A} c_{ij} z_{ij}$-part of the objective function, they ensure that the contribution of vertices $i \in V \setminus D$ is measured correctly (i.e., these are the external edge costs). Finally, constraints 
\eqref{eq:wtd44} and the $\sum_{e\in E} c_e y_e$-part in the objective function make sure that the contribution of edges $e:\{i,j\}$, where both $i,j \in D$, is measured correctly (i.e., these are the internal edge costs). We note that both variable-sets $\mathbf{y}$ and $\mathbf{z}$ can be relaxed to be continuous, as for binary $\mathbf{x}$, these variables are automatically binary.

\paragraph{Valid inequalities} Next, we present three families of valid inequalities for \ffirst. Separation of these inequalities is discussed in Section \ref{sec:sep}.

\begin{theorem}
	Inequalities
	\begin{equation}
	y_{e:\{i,j\}} + z_{ij} \leq x_i,\quad \forall (i,j) \in A \label{eq:wtd43lifted}  \tag{XZLINK2L}
	\end{equation}
	
	are valid for \ffirst.
\end{theorem}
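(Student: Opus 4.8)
The plan is to show that \eqref{eq:wtd43lifted} holds at every integer-feasible point of \ffirst, since it is merely the original linking constraint \eqref{eq:wtd43} strengthened by the extra term $y_{e}$ on the left-hand side. Throughout I would rely on the combinatorial meaning of the variables fixed when \ffirst\ was introduced: $y_{e:\{i,j\}}=1$ exactly when both endpoints are selected (so in particular $y_e\le x_i$ and $y_e\le x_j$), while $z_{ij}=1$ encodes that $j\notin D$ is covered through the arc $(i,j)$. The one subtlety worth flagging up front is that \eqref{eq:wtd44} only lower-bounds $y_e$; to use $y_e\le x_j$ I would appeal to this defining product interpretation of $\mathbf{y}$ (equivalently, recall that $c_e\ge 0$ forces $y_e$ to its natural value $\min\{x_i,x_j\}$ in the integer hull), which is exactly the observation already made in the text that $\mathbf{y}$ and $\mathbf{z}$ may be relaxed to be continuous.

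The heart of the argument is that $y_{e}$ and $z_{ij}$ are mutually exclusive: $y_e=1$ demands $j\in D$, whereas $z_{ij}=1$ demands $j\notin D$. I would make this precise by a case distinction on $x_j$. If $x_j=1$, then reading \eqref{eq:wtd42} at vertex $j$, namely $x_j+\sum_{(k,j)\in\delta^-(j)} z_{kj}=1$, and discarding the remaining nonnegative terms yields $x_j+z_{ij}\le 1$, hence $z_{ij}=0$; the left-hand side of \eqref{eq:wtd43lifted} then collapses to $y_e\le x_i$. If instead $x_j=0$, then $y_e\le x_j=0$ forces $y_e=0$, so the left-hand side collapses to $z_{ij}$, which is bounded by $x_i$ through the original constraint \eqref{eq:wtd43}. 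In both cases the right-hand side $x_i$ is met, which completes the verification.

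I expect the only genuine obstacle to be the bookkeeping around the $\mathbf{y}$-variables just described: one must be explicit that $y_e$ is used with its intended semantics $y_e=\min\{x_i,x_j\}$ rather than merely with the relaxed inequality \eqref{eq:wtd44}, since otherwise a point with $x_i=x_j=0$ and $y_e=1$ would spuriously violate the inequality. The remaining ingredients --- extracting $z_{ij}=0$ from the covering equation \eqref{eq:wtd42} at $j$, and invoking \eqref{eq:wtd43} --- are routine. As a sanity check I would test the two extreme configurations: an internal edge with $x_i=x_j=1$ (forcing $z_{ij}=0$) and an external assignment with $x_i=1,\,x_j=0$ (forcing $y_e=0$), each of which attains the bound with equality, confirming that the lifting by $y_e$ is as tight as possible.
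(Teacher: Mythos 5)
Your argument is correct and is essentially the paper's: the paper's proof is a one-line observation that $y_e$ and $z_{ij}$ are mutually exclusive in any feasible solution and that each being one forces $i\in D$, which is exactly the content of your case distinction on $x_j$. Your additional care about the fact that \eqref{eq:wtd44} only lower-bounds $y_e$ (so one must invoke the defining semantics $y_e=\min\{x_i,x_j\}$, as the paper does implicitly) is a reasonable clarification but not a different route.
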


\begin{proof}
	These inequalities are a lifted version of inequalities \eqref{eq:wtd43}. Validity follows from the fact, that in any feasible solution, either the edge $e:\{i,j\}$ or the arc $(i,j)$ can be contained, and in both cases, this implies that $i \in D$.	
\end{proof}

\begin{theorem}
	Inequalities
	\begin{equation}
	\sum_{e \in\delta(i)} y_e \geq x_i,\;\forall i\in V \label{eq:domobj} \tag{TDOMY}
		\end{equation}
	are valid for \ffirst.
\end{theorem}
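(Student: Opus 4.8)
The plan is to establish validity by a case distinction on the integral value of $x_i$, since the inequality only needs to hold at feasible integer points of \ffirst. The right-hand side is $x_i\in\{0,1\}$, so there are only two cases, and the case $x_i=0$ is immediate: every $y_e\geq 0$ forces $\sum_{e\in\delta(i)} y_e \geq 0 = x_i$.

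The substantive case is $x_i=1$. Here I would read off the combinatorial meaning of \eqref{eq:domobj}: it asserts that whenever $i$ belongs to the dominating set $D$, at least one edge incident to $i$ is \emph{internal} (i.e., has both endpoints in $D$). This is exactly the total domination requirement applied to the vertex $i$ itself. To formalize it, I would invoke the total domination constraints \eqref{eq:dom}: since $\sum_{j\in N(i)} x_j \geq 1$ and $\mathbf{x}$ is integral, there must exist a neighbor $j^*\in N(i)$ with $x_{j^*}=1$.

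With such a neighbor fixed, I would consider the edge $e^*=\{i,j^*\}$ and apply the internal-edge linking constraint \eqref{eq:wtd44}, which gives $y_{e^*}\geq x_i + x_{j^*} - 1 = 1$. Since $y_{e^*}\leq 1$, this forces $y_{e^*}=1$, and because $e^*\in\delta(i)$ while all remaining $y_e$ are nonnegative, I would conclude $\sum_{e\in\delta(i)} y_e \geq y_{e^*} = 1 = x_i$, completing this case.

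There is essentially no hard step: the argument is a short case analysis resting on the interplay between \eqref{eq:dom} (which guarantees a dominating neighbor) and \eqref{eq:wtd44} (which then activates the corresponding internal-edge variable). The one point worth emphasizing in the write-up is that integrality of $\mathbf{x}$ is genuinely used, namely in selecting a single neighbor with $x_{j^*}=1$, so that \eqref{eq:domobj} is a cut for the integer hull rather than a consequence of the continuous relaxation. Indeed, a fractional point in which $i$ has two neighbors $j_1,j_2$ with $x_i=x_{j_1}=x_{j_2}=\tfrac{1}{2}$ satisfies \eqref{eq:dom} and \eqref{eq:wtd44} with all incident $y_e=0$, yet violates \eqref{eq:domobj}; noting this confirms that the inequality genuinely strengthens the formulation and justifies treating it as a valid inequality to be separated.
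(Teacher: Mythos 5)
Your proof is correct and follows essentially the same route as the paper's: both arguments reduce to observing that total domination (constraint \eqref{eq:dom}) forces some neighbor $j^*$ of a selected vertex $i$ into $D$, whence the corresponding $y$-variable on the edge $\{i,j^*\}$ must be one; you merely make explicit the role of \eqref{eq:wtd44} in activating that variable, which the paper leaves implicit in the definition of $\mathbf{y}$. Your closing remark exhibiting a fractional point that violates \eqref{eq:domobj} is a useful addition confirming the inequality is not implied by the LP relaxation, but it is not part of the validity argument itself.
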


\begin{proof}
	By definition of total domination, for each $i \in V$, at least one adjacent vertex $j \in N(i)$ must be in $D$. Thus, if $x_i=1$, which means $i \in D$, at least one of the $y_e$-variables for $e \in \delta(i)$ must be one.
\end{proof}

%\begin{theorem}
%	Inequalities
%	\begin{equation}
%	\sum_{e \in\delta(i)} y_e \geq x_i,\;\forall i\in V \label{eq:domobj2} \tag{TDOMZ}
%	\end{equation}
%	are valid for \ffirst.
%\end{theorem}
%
%
%\begin{proof}
%	Follows from the validity of~\eqref{eq:domobj} and~\eqref{eq:domobj2}.
%\end{proof}

For the next 
%two families
family
of valid inequalities, we observe that constraints \eqref{eq:wtd44} together with inequalities $y_{e:\{i,j\}}\leq x_i$ (which are valid, but redundant in our case due to the minimization objective) and the binary constraints on $(\mathbf{x,y})$ give the \emph{boolean quadric polytope (BQP)} (see, e.g.,~\cite{padberg1989boolean}). Thus all inequalities valid for the BQP are also valid for our formulation. We note that there are many graph problems which can be either directly formulated using the BQP, or using the BQP and additional constraints (see, e.g.,~\cite{billionnet2005different,bonomo2012polyhedral,macambira2000edge}). There is a huge number of families of valid inequalities known for the BQP, however, most of them are not useful within a branch-and-cut algorithm as there are no efficient separation procedures known for them (see, e.g.,~\cite{letchford2014new}). 
We thus just used the following inequalities known as 
%\emph{triangle inequalities} 
\emph{clique inequalities} 
in our algorithm.

\begin{theorem}
Let $C \subset V$, such that $E(C)\subset E$ form a \emph{clique}. The	\emph{clique inequalities} 
	\begin{equation}
		\sum_{e \in E(C)} y_e  \geq \sum_{i \in C} x_i  -1 \tag{CLIQUE} \label{eq:clique}
	\end{equation}
are valid for \ffirst.
% Let $i,j,k \in V$ form a triangle, i.e., $e_1=\{i,j\},e_2=\{i,k\},e_3=\{j,k\} \in E$. The \emph{triangle inequalities}
%		\begin{equation}
%		x_i+x_j+x_k \geq y_{e_1}+ y_{e_2}+ y_{e_3}-1 \tag{TRIANGLE} \label{eq:triangle}
%			\end{equation}
%		\begin{equation}
%		x_k+y_{e_1} \geq y_{e_2}+ y_{e_3} \tag{TRIANGLE2} \label{eq:trianglet}
%		\end{equation}
%			
%	are valid for \ffirst.	
\end{theorem}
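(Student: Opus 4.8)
The plan is to establish \eqref{eq:clique} directly on every feasible integer point of \ffirst, rather than appealing to the full machinery of the BQP. Fix an arbitrary feasible solution $(\mathbf{x},\mathbf{y},\mathbf{z})$ and a clique $C \subseteq V$, and write $s = \sum_{i \in C} x_i$ for the number of clique-vertices placed in $D$. Since $\mathbf{x}$ is binary, $s$ is a nonnegative integer, and the right-hand side of \eqref{eq:clique} is exactly $s-1$.

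First I would bound the left-hand side from below. For any edge $e:\{i,j\}$ with both endpoints selected, constraint \eqref{eq:wtd44} forces $y_e \geq x_i + x_j - 1 = 1$, and since $y_e$ is binary this gives $y_e = 1$; equivalently $y_e \geq x_i x_j$ holds for every edge. The edges of $E(C)$ whose both endpoints lie in $D$ are precisely the edges of the complete subgraph induced by the $s$ selected clique-vertices (here the clique assumption is essential: every pair of selected vertices is joined by an edge of $E(C)$). There are $\binom{s}{2}$ such edges, each contributing $y_e = 1$, so $\sum_{e \in E(C)} y_e \geq \binom{s}{2}$.

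It then remains to verify the purely arithmetic inequality $\binom{s}{2} \geq s - 1$ for every integer $s \geq 0$. This follows from $\binom{s}{2} - (s-1) = \tfrac{1}{2}(s-1)(s-2) \geq 0$, since $(s-1)(s-2)$ is a product of two consecutive integers and hence nonnegative. Chaining the two bounds yields $\sum_{e \in E(C)} y_e \geq \binom{s}{2} \geq s-1 = \sum_{i \in C} x_i - 1$, which is exactly \eqref{eq:clique}.

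I do not anticipate a genuine obstacle here: the clique hypothesis is what guarantees that $\binom{s}{2}$ (and not fewer) internal edges are forced to one, and the slack in \eqref{eq:wtd44} only ever helps, since it can push $y_e$ above $x_i x_j$ but never below. The only point requiring a little care is the boundary behavior of the arithmetic estimate, where equality $\binom{s}{2} = s-1$ occurs at $s \in \{1,2\}$; these are the tight cases that confirm the inequality cannot be strengthened by a constant. Alternatively one could shortcut the whole argument by invoking the remark preceding the statement: \eqref{eq:wtd44} together with $y_e \le x_i$ reproduces the BQP, and the clique inequalities are known to be valid for the BQP. I nevertheless prefer the direct counting proof above, as it is self-contained and makes transparent why the clique structure is needed.
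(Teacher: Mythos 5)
Your proof is correct. Fixing an integer feasible point, \eqref{eq:wtd44} together with binarity forces $y_e=1$ for every edge of $E(C)$ with both endpoints selected; by the clique hypothesis there are exactly $\binom{s}{2}$ such edges where $s=\sum_{i\in C}x_i$, and nonnegativity of the remaining $y_e$ gives $\sum_{e\in E(C)}y_e\geq\binom{s}{2}\geq s-1$, the last step being the identity $\binom{s}{2}-(s-1)=\tfrac{1}{2}(s-1)(s-2)\geq 0$. This is a genuinely different route from the paper, which offers no explicit proof of this theorem: it instead observes beforehand that \eqref{eq:wtd44}, the (redundant) bounds $y_{e:\{i,j\}}\leq x_i$, and the binary constraints reproduce the boolean quadric polytope, and then imports the clique inequalities as a known valid family for the BQP, citing the literature. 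The paper's approach buys generality --- it certifies at once every BQP-valid inequality, of which \eqref{eq:clique} is only the instance the authors choose to separate --- whereas your counting argument is self-contained, identifies exactly which constraints of \ffirst\ are used (only \eqref{eq:wtd44} and the variable bounds), and exhibits the tight cases $s\in\{1,2\}$. One small remark: your direct argument does not even need the upper bounds $y_e\leq x_i$ that the BQP reduction requires, since in \ffirst\ a $y_e$ exceeding $x_ix_j$ can only increase the left-hand side; this makes the validity claim slightly more transparent for the formulation as actually written.
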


Section \ref{sec:sep} details how these valid inequalities are incorporated into our solution framework.
%As shown in Section~\ref{sec:compres}, the presented valid inequalities contribute to the performance of the algorithm devised to solve \ffirst.

\subsection{Formulation \fsecond\ and valid inequalities}

In formulation \fsecond, we use continuous variables $q_i \geq 0$, $i \in V$ to measure the external edge costs. This is done by exploiting a Benders decomposition scheme that allows projecting out the $\mathbf{z}$-variables, similar as it is done for the \UFL~(see, e.g., \cite{fischetti2016redesigning}).
By doing so, we obtain a polynomial set of optimality cuts, which are detailed next (note that by adding a "dummy-arc"-variable $z_{ii}$ with weight zero to formulation \ffirst, replacing $x_i$ in \eqref{eq:wtd42} with $z_{ii}$ and adding a constraint $z_{ii}\leq x_i$ the connection to \UFL\ becomes directly evident; in the following, we also provide a combinatorial argument for their correctness without the need for Benders decomposition).
For ease of exposition, for a given vertex $i$, let $N'(i) = \{j_1,\ldots,j_k,\ldots, j_{|N(i)|} \}$ be the ordered set of adjacent vertices such that $c_{j_1 i}\leq \ldots \leq c_{j_k i}\leq\ldots\leq c_{j_{|N(i)|}i}$. Then the cuts for a given $i \in V$ are given by
\begin{align}\
q_i \geq c_{ki} - \sum_{k' = 1}^{k-1} (c_{ki} - c_{k'i})x_{k'} - c_{ki}x_i,\;\forall k\in \{1,\ldots,|N'(i)|\}. \label{eq:f2q} \tag{EXTCOSTS-$i$}
\end{align}
When $x_i=0$, i.e., $i \in V \setminus D$,~\eqref{eq:f2q} is similar to the Benders optimality cuts for the \UFL~and, therefore,
these inequalities measure the external edge cost for vertex $i$. 
When $x_i=1$, i.e., $i \in D$ (and thus $i$ incurs in no external edge cost), 
the right hand side of the cuts is at most zero, due to $-c_{ki}x_i$ and, therefore, they are also correct.  
By replacing~\eqref{eq:wtd42} and~\eqref{eq:wtd43} with~\eqref{eq:f2q}, the \WTDP\ can be formulated as
\begin{align}
\mbox{\fsecond}\quad\quad\quad w^* = \min &\sum_{i\in V} \left(w_i x_ i + q_i\right) + \sum_{e\in E} c_e y_e \notag\\
\mbox{s.t.}\quad\quad & \eqref{eq:dom}, \eqref{eq:f2q}, \eqref{eq:wtd44} \notag \\
&\mathbf{x}\in\{0,1\}^{|V|},\;\mathbf{y}\in\{0,1\}^{|E|}\;\mbox{and}\; q_i \geq 0, \forall i \in V. \notag
\end{align}

We note that $\mathbf{y}$-variables could also be projected out, however, the resulting optimality cuts would not have the same effective structure as \eqref{eq:f2q}.
Namely, as each $y_{e=\{i,j\}}$ links two vertices $i,j \in V$, the corresponding Benders subproblem for a fixed $\mathbf{x}$ would not decompose for each vertex.

\paragraph{Valid inequalities} Inequalities \eqref{eq:f2q} can be lifted by using the $\mathbf{y}$-variables. 
%As these cuts again have a nice combinatorial structure, we give a direct proof for their validity, however, similar to \eqref{eq:f2q} which can be obtained by Benders decomposition of \ffirst, the lifted inequalities can be obtained by starting with \ffirst\ with the lifted inequalities \eqref{eq:wtd43lifted}.

\begin{theorem}
	Let $i \in V$ and $k \in \{1,\ldots,|N'(i)|\}$. Then inequalities
\begin{align}\
q_i \geq c_{ki} - \sum_{k' = 1}^{k-1} (c_{ki} - c_{k'i})x_{k'} - c_{ki}x_i+\sum_{k' = 1}^{k-1} (c_{ki} - c_{k'i})y_{e=\{k'i\}} ,\; \label{eq:f2qlifted} \tag{EXTCOSTS-$i$-L}
\end{align}
are valid for \fsecond.
\end{theorem}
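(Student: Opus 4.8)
The plan is to establish validity of \eqref{eq:f2qlifted} by a case analysis on the value of $x_i$, verifying that in every feasible integer solution the right-hand side is a valid lower bound on the true external edge cost $q_i$ incurred by vertex $i$. I would first recall that the base cuts \eqref{eq:f2q} are already known to be valid, so the entire task reduces to showing that the extra term $\sum_{k'=1}^{k-1}(c_{ki}-c_{k'i})y_{e=\{k'i\}}$ can only \emph{strengthen} the inequality, i.e., that it is nonpositive in all relevant feasible solutions, or more precisely that the augmented right-hand side never exceeds the actual external cost. Note that each coefficient $c_{ki}-c_{k'i}\geq 0$ since $k'<k$ and $N'(i)$ is ordered by nondecreasing edge weight, so the added sum is nonnegative; hence the lifting genuinely tightens the bound and the crux is to argue it does not overshoot.

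First I would dispose of the case $x_i=1$, i.e., $i\in D$. Here the true external cost is zero, so I must check the right-hand side is at most zero. Since $y_{e=\{k'i\}}\leq x_i=1$ and also $y_{e=\{k'i\}}\leq x_{k'}$ (the $\mathbf{y}$-variables only activate when both endpoints are in $D$, enforced together with \eqref{eq:wtd44} and the minimization), the added term is bounded above by $\sum_{k'=1}^{k-1}(c_{ki}-c_{k'i})x_{k'}$, which exactly cancels the subtracted term. Combined with $-c_{ki}x_i=-c_{ki}$ and the leading $c_{ki}$, the whole expression is $\leq 0$, as required.

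The main case, and the step I expect to be the real obstacle, is $x_i=0$. Now the true external cost of $i$ equals $\min\{c_{j i}: j\in N(i)\cap D\}$ when $i$ has a neighbor in $D$ (feasibility via \eqref{eq:dom} guarantees at least one such neighbor), and I must show the lifted right-hand side stays below this minimum. When $x_i=0$ every $y_{e=\{k'i\}}=0$ is forced (an edge incident to $i$ cannot be internal), so the added lifting term vanishes and the inequality collapses to the base cut \eqref{eq:f2q}, whose validity is already assumed. The subtle point to articulate cleanly is therefore not a fresh estimate but the observation that the lifting term is supported precisely on the configurations ($x_i=1$) where \eqref{eq:f2q} is loose, and is inert exactly where \eqref{eq:f2q} is tight; this is what makes the lifting valid rather than a strict strengthening that could cut feasible points.

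I would organize the write-up as: (i) state that coefficients are nonnegative by the ordering of $N'(i)$; (ii) handle $x_i=0$ by noting $y_{e=\{k'i\}}=0$ and invoking validity of \eqref{eq:f2q}; (iii) handle $x_i=1$ by bounding the lifting term via $y_{e=\{k'i\}}\leq x_{k'}$ to show the right-hand side is nonpositive. The only genuine care needed is in step (iii), ensuring the inequality $y_{e=\{k'i\}}\leq x_{k'}$ is available in \fsecond; since \fsecond\ retains \eqref{eq:wtd44} but drops the explicit $y_e\leq x_i$ bounds, I would note that those bounds hold implicitly at any optimal (indeed any feasible minimal) solution because setting $y_e=1$ with an endpoint outside $D$ is never forced and only increases cost, so the argument goes through on the integer points that matter for validity.
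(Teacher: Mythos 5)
Your proof is correct and is essentially the same argument as the paper's: both rest on the observation that whenever $y_{e=\{k',i\}}=1$ the intended encoding forces $x_i=x_{k'}=1$, so the terms $-c_{ki}x_i$ and $-(c_{ki}-c_{k'i})x_{k'}$ already drive the right-hand side down by enough to absorb the added lifting term, while if all these $y$-variables vanish the inequality collapses to the base cut \eqref{eq:f2q}. You merely organize the cases by $x_i$ rather than by the $y$-values, and your remark about the absent upper bounds $y_e\le x_i$ in \fsecond\ is a point the paper's proof passes over silently by appealing to the ``definition of the variables.''
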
	

\begin{proof}
When the $\mathbf{y}$-variables are zero, the inequalities are similar to \eqref{eq:f2q} and thus clearly valid. Now suppose some $y_{e=\{l,i\}}$ for some $1 \leq l \leq k-1$ is one. By definition of the variables, this means that both $x_i$ and $x_l$ are one, and thus on the right-hand-side (rhs) of the cut, we have $c_{ki}-(c_{ki} - c_{li}) -c_{ki}=-(c_{ki} - c_{li})<0$. Thus, $(c_{ki} - c_{li})$ (which is the coefficient of $y_e$) can be added to the rhs, which then will be zero and the inequality still remains valid. The same reasoning also applies, if more $y_e$-variables are one.
\end{proof}

Finally, we observe that inequalities~\eqref{eq:domobj} and~\eqref{eq:clique} presented for \ffirst\ are also valid for \fsecond, as they are in the $(\mathbf{x,y})$-space. 

%\begin{align}
%\sum_{e\in E\setminus\{i,j\}} y_e \geq z_{ij},\;\forall (i,j)\in A
%\end{align}

\section{Implementation details of the branch-and-cut algorithms}
\label{sec:bc}

In this section, we give implementation details of the branch-and-cut algorithms we designed based on \ffirst\ and \fsecond. 
%In Section~\ref{sec:ingredients}, we report on the influence of the ingredients by performing computational experiments with different features turned off/on, details of the particular tested settings are given there.

\subsection{Initialization and separation of cuts \label{sec:sep}}

We first describe how the valid inequalities are incorporated in our frameworks. We note that to design a successful branch-and-cut scheme, it is often crucial to carefully select which cuts to add, e.g., even if in theory the cuts improve the lower bound, they may lead to slow linear programming (LP)-relaxation solution times due to their density or numerical stability, which is detrimental to the node-throughput and thus to the overall performance of the branch-and-cut. We refer to~\citep{Dey2018,WesselmannStuhl2012, rahmaniani2017benders} for recent works on theoretical and computational studies on the challenges of cutting-plane selection. In Section~\ref{sec:ingredients} we also provide computational results obtained when just adding individual families of valid inequalities to the formulations.

The lifted inequalities~\eqref{eq:wtd43lifted} are added at the initialization, by simply replacing their non-lifted counterpart~\eqref{eq:wtd43}. The objective-cuts~\eqref{eq:f2q}, resp., their lifted version~\eqref{eq:f2qlifted} in \fsecond\ are added for the five smallest values of $c_{ki}$ for each $i\in V$ at initialization, and the remaining ones are then separated on-the-fly by enumeration. Inequalities~\eqref{eq:domobj} are also separated by enumeration.

Clique inequalities~\eqref{eq:clique} are separated heuristically. 
We observe that inequalities~\eqref{eq:wtd44} are a special case of~\eqref{eq:clique} for $|C|=2$.  
For each edge $e=\{i,j\}\in E$, we try to construct a violated inequality~\eqref{eq:clique} by greedily constructing a clique containing $e$.
Thus, initially, let $C=\{i,j\}$. Let $(\mathbf{\tilde x, \tilde y})$ be the LP-values at the current branch-and-cut node. 
We sort all vertices $k \in \cap_{i \in C} N(i)$ (i.e., all candidate vertices to grow the clique $C$) in descending order according to $|N(k)|\cdot (\tilde x_k+\epsilon)$, for $\epsilon=0.0001$.  Note that by adding any vertex $k$ to $C$, the (potential) violation of the constructed clique inequality changes by $\tilde  x_k-\sum_{i \in C} \tilde y_{e=\{i,k\}}$. Thus, we iterate through the sorted list of candidate vertices to increase $C$, and whenever this value is greater than $\epsilon$ for a given $k$, we add it to $C$, and repeat the procedure for this $C$. This is done, until no more vertex can be added to $C$. We then add the clique inequality for this $C$ if it is violated. To speed-up separation, if an edge $e$ is already contained in a clique inequality added during the current round of separation at a branch-and-cut node, we do not consider it in constructing additional clique inequalities. 

In order to avoid overloading the LP-relaxation with cuts and to allow for a fast node-throughput in the branch-and-cut, we only separate inequalities at the root node and limit separation to ten rounds. 
Naturally, to ensure correctness when using \fsecond\, violation of objective-cuts~\eqref{eq:f2q}, resp., their lifted version~\eqref{eq:f2qlifted}, is also checked whenever an integer solution is obtained during the branch-and-cut. 
As inequalities~\eqref{eq:domobj} and in particular~\eqref{eq:clique} can become quite dense, especially if the instance graph has many edges, we use the option \texttt{UseCutFilter} provided by CPLEX (the chosen MIP-solver), 
when adding these cuts. With this option, CPLEX checks the cut with the same criteria (e.g., density) as it checks its own general purpose cuts, and adds it only if it determines that it is beneficial.

%Triangle inequalities \eqref{eq:triangle} and \eqref{eq:trianglet} are also separated by enumeration.

\subsection{Starting and primal heuristic and local search\label{sec:heur}}

We implemented both a starting heuristic and a primal heuristic; the former gets called at the initialization while the latter gets called during the execution of the corresponding branch-and-cut algorithms. Both of these heuristics construct feasible solutions, which we then try to improve by applying a local search procedure
The starting heuristic starts out with the solution $D^H=V$ consisting of the set of all vertices (which clearly is a feasible solution). 
We then greedily remove vertices from $D^H$ as long as the solution remains feasible. 
Algorithm~\ref{alg:starting} details our starting heuristic. 

At each iteration, we use a score $score_i$ for choosing the vertex to remove;
this score gives for each vertex in $D^H$ the improvement in objective solution value it would bring if it is removed. 
When removing a vertex, say $i$, its vertex weight $v_i$ and the internal edge costs $w_{ij}$ for $j \in D^H$ are not applicable anymore. 
On the other hand, we need to consider the new external edge cost for covering $i$ and, moreover, 
we have to consider that all the vertices $j' \in V \setminus D^H$ that are covered by $i$ up to that iteration now need to be covered by another vertex in $D^H$
(thus for covering these vertices we will get new, similar or higher, external edge costs).
We note that removing a vertex only causes local changes in the solution structure, 
thus we do not need to calculate $score_i$ for each vertex in $D^H$ from scratch in every iteration. 
In particular, when node $i$ gets removed, the score needs to be re-calculated only for neighboring nodes 
$j \in N(i)$ and for the corresponding neighbors $j' \in N(j)$ 
(removing $i$ may change the external edge costs associated with such a $j'$ as both $i$ and $j'$ share $j$ as neighbor). 
We observe that verifying if $D^H$ is still be a total dominating set after removing $i$ 
(line~\ref{alg:check} of Algorithm~\ref{alg:starting}) 
can be done efficiently by storing the number $N^H_j=|N(j) \cap D^H|$ for each $j \in V$, 
i.e., the number of neighbors of $j$ contained in $D^H$. 
At the begging of the algorithm execution, it holds $N^H_j=|N(j)|$, and whenever a vertex $i'$ gets removed in the course of the algorithm, $N^H_j$ gets decreased by one for each $j \in N(i)$. 
Therefore, $D^H \setminus \{i\}$ is still a total dominating set, if and only if $N^H_j>1$ for each $j \in N(i)$. 

% The score for choosing the current vertex to remove in the greedy algorithm is the improvement in objective solution value removing  would 

\SetKwRepeat{Do}{do}{while}
\SetKw{Continue}{continue}
\begin{algorithm}[h!tb]   
	\DontPrintSemicolon                 
	% and a label for \ref{} 
	\SetKwInOut{Input}{input}\SetKwInOut{Output}{output}
	\Input{instance $(G=(V,E), \mathbf{(c,w)})$ of the \WTDP}
	\Output{total dominating set $D^H$}
%	\tcc*[h]{starting heuristic}	
	$D^H\gets V$ \label{alg:start}\;
 	$score_i \gets -\infty$ \tcp*[f]{store score function for faster evaluation, $-\infty$ indicates re-calculation needed} \;
 	$improvingMoveExists \gets false$ \;
	\Do{$improvingMoveExists$}
	{
		$improvingMoveExists \gets false$ \;
		$vertexToRemove \gets null$\;
		$bestScore=0$\;
		\For{$i \in D^H$}
		{
			 \If{$D^H \setminus \{i\}$ is not a total dominating set\label{alg:check}}{\Continue}
			\If(\tcp*[f]{re-calculation of score needed}){$score_i=-\infty$}
			{
				$score_i\gets w_i$\tcp*[f]{vertex costs saved}\;
				\For{$j \in N(i)\cap D^H$}
				{
					$score_i\gets score_i+w_{ij}$\tcp*[f]{internal edge costs saved}\;	
				}
				$w^* \gets \min_{j \in D^H} w_{ij}$\tcp*[f]{external edge cost for covering $i$ updated}\;
				$score_i\gets score_i -w^*$\;
				\For(\tcp*[f]{external edge costs for vertices currently covered by $i$ updated}){$j \in N(i)\cap (V \setminus D^H)$}
				{
					\If{$i \in \argmin_{j'\in D^H} w_{jj'}$}
					{
						$w_j^*\gets \min_{j \in (D^H \setminus \{i\})} w_{ij}$\tcp*[f]{external edge cost for covering $j$ updated}\;
						$score_i\gets score_i-w^*_{j}$\;
					}
				}
				
			}
		
			\If{$score_i>bestScore$}
			{
				$bestScore=score_i$ \label{alg:upstart}\;
				$vertexToRemove \gets i$\;
				$improvingMoveExists \gets true$ \label{alg:upend}\;	
			}
		}
		\If{$improvingMoveExists$}
		{
			$D^H \gets D^H \setminus \{vertexToRemove\}$\;
			\For(\tcp*[f]{score for the neigbors of $vertexToRemove$, and their neigbors need to be updated}){$\forall j \in N(vertexToRemove)$}
			{
			\For{$\forall j' \in N(j)$}
			{
					$score_{j'}\gets -\infty$\;
			}	
			}
		}
	}
%	\tcc*[h]{local search procedure}
%	 $improvingMoveExists \gets false$ \;		
	\caption{Starting heuristic\label{alg:starting} }
\end{algorithm}

% Let $D^H$ be the heuristic solution

The primal heuristic is guided by the $\mathbf{(\tilde x)}$-values of the LP-relaxation at the current branch-and-cut node. First, we sort the vertices $i \in V$ in descending order according to $\tilde x_i$. 
Afterwards, ties are broken first by degree of the vertices (again in descending order), and if there remain ties, they are broken by vertex-index. Let $sorted$ be the list of sorted vertices, $D^H=\emptyset$ (the solution to be constructed) and $covered=\emptyset$ (the list of vertices covered by $D^H$). 
To construct a heuristic solution, we iterate through $sorted$ and whenever $|N(i) \cap (V\setminus covered)|>0$ for the currently considered vertex $i$, i.e., $i$ covers a vertex not yet covered by the current partial solution $D^H$, we add $i$ to $D^H$ and update $covered$ by $covered \cup N(i)$. We stop when $covered=V$, i.e., $D^H$ is a total dominating set and thus a feasible solution. 

The local search procedure is shown in Algorithm~\ref{alg:local}. It uses two local search operators, namely adding a vertex $i$ to the current solution $D^H$ and removing a vertex $i$ from the current solution $D^H$. The procedures \texttt{testAddVertex($D^H$,$i$)} and \texttt{testRemoveVertex($D^H$,$i$)} revert the change in objective function caused by adding/removing a vertex $i$. This can be done efficiently, as the changes caused by these moves are of a local nature, as described above (e.g., the test for the change caused by removing $i$ is exactly the calculation of the score-function in Algorithm~\ref{alg:starting}). We first try the add-move, and when this move cannot improve the current solution anymore, we try the remove-move. If it is successful, we go back to trying the add-move, if not, the local search terminates. 
We iterate through the vertices by their indexes, and if a move is possible, we apply it, and then restart (i.e., we use a \emph{first improvement} strategy).

\SetKwRepeat{Do}{do}{while}
\SetKw{Continue}{continue}
\begin{algorithm}[h!tb]   
	\DontPrintSemicolon                 
	% and a label for \ref{} 
	\SetKwInOut{Input}{input}\SetKwInOut{Output}{output}
	\Input{total dominating set $D^H$}
	\Output{total dominating set $D^H$ (with potentially better objective function value)}
	$improvingMoveExists \gets false$ \;
	\Do{$improvingMoveExists$}
	{
			$improvingMoveExists \gets false$ \;
	\For{$i \not \in D^H$}
	{
		\If{\texttt{testAddVertex($D^H$,$i$)}$>0$}
		{
			$D^H=D^H \cup\{i\}$\;
			$improvingMoveExists \gets true$ \;
			\bf{break}\;
		}
	}
	\If{$improvingMoveExists==false$}
		{
		\For{$i \in D^H$}
		{
			\If{\texttt{testRemoveVertex($D^H$,$i$)}$>0$}
			{
				$D^H=D^H \setminus \{i\}$\;
				$improvingMoveExists \gets true$ \;
				\bf{break}\;
			}
			
		}
	}

}	
	\caption{Local search\label{alg:local} }
\end{algorithm}

\subsection{Branching priorities \label{sec:bra}}

For both \ffirst\ and \fsecond, once the $\mathbf{x}$-variables are fixed to binary, the values of all the other variables (i.e., $\mathbf{(y,z)}$, resp., $\mathbf{(y,q)}$) automatically follow. We thus give branching priorities $100\cdot|N(i)|$ to the $\mathbf{x}$-variables in the MIP-solver, CPLEX in our case (while the branching priorities of the other variables were left at their default value, i.e., zero).

\section{A genetic algorithm \label{sec:genalg}}

Genetic algorithms (GAs) are among the most prominent metaheuristic approaches for solving (combinatorial) optimization problems; we refer the reader to the book~\cite{Kramer2017} for an overview on essential elements of this class of
procedures.
GAs have been developed for tackling set dominating problems.
For instance, a hybrid GA has been developed in~\cite{hedar2010hybrid} for the \emph{minimum dominating set problem}, 
where the GA methodology is combined with local search and intensification schemes;
likewise, in~\cite{giap2014parallel} a parallelized GA is presented for the same problem.
Further examples on GA-based approaches for related problems can be found
in~\cite{sundar2014steady} for the \emph{dominating tree problem},
and in~\cite{RengaswamyEtAl2017} for the \emph{minimum weight minimum connected dominating set problem}.

In their general setting, GAs explore the solution space by keeping a set of feasible solutions, denoted as \emph{population}.
Starting from an initial population, the algorithm iteratively creates a new population (i.e., new solutions) by typically using the following three (randomized) bio-inspired operators: \emph{selection}, \emph{mutation} and \emph{crossover}. 
The 
\emph{selection} operator selects a subset of the current population (according to a \emph{fitness} value of each solution), from which (usually) pairs of solutions are taken and a \emph{crossover} operator is applied to combine these pairs to create a new solutions. 
To these new solutions a \emph{mutation} operator is applied, which randomly modifies the solution in order to keep the population diverse.

%Typically, these operators function according to mutation and crossover rates, whose values are set by the user.
%An additional setting of most of GAs corresponds to the population size, which is also set by the user.

Algorithm~\ref{alg:genetic} gives an outline of the genetic algorithm we developed for the \WTDP.
The initial population is constructed by using a generalized randomized adaptive search procedure (GRASP) version of our starting heuristic. 
GRASP is a general technique to generate (diverse) heuristic solutions by randomizing the construction phase. 
For further details on GRASP, the reader is referred to the recent textbook~\cite{ResendeRibeiro2016}. 
In order to turn our starting heuristic into a GRASP, we add randomization to the choosing of the vertex with the best score (i.e., lines \ref{alg:upstart}-\ref{alg:upend}): 
If a vertex $i$ has $score_i>bestScore$, we generate a random integer in $[0,99]$ and only apply lines \ref{alg:upstart}-\ref{alg:upend} if this integer is larger than a given value $cutoff$.

As \emph{crossover} operator, we also use modifications of Algorithm~\ref{alg:starting}, resp., the GRASP. 
In particular, for crossover between two solution $D^1$, $D^2$, we use the GRASP, and set $D^H \gets D^1 \cup D^2$ as initial solution in line~\ref{alg:start}. 
For \emph{mutation}, we generate a random integer $m$ in a given range $[m_l,m_u]$ and then randomly remove $m$ vertices from the current solution $D^H$. 
After removing these vertices, $D^H$ may be infeasible, in order to make it feasible, we apply the same heuristic as our primal heuristic (with just the degree of vertices as sorting criterion, as of course we have no LP-values). 
After mutation, we also apply the local search procedure described in Algorithm~\ref{alg:local}. 
The newly obtained solutions are merged with the current population, and then the $populationSize$ best are selected as the next generation, for a given value of $populationSize$. 
As a fitness value for selection, we use the objective function values of the solutions. In order to keep the population diverse, we keep at most one solution for each fitness value and size $|D^H|$ in the population (this is done by checking if the current population already contains a solution with the fitness value and size of the currently created solution, and if yes, the solution is discarded).
To create the population, we run the GRASP $initialPopulationSize$ times, for a given value of parameter $initialPopulationSize$ and then select the $populationSize$ best solutions. 

We used the following parameter values in our implementation, these values were determined using some preliminary computational experiments: $initialPopulationSize=100$, $populationSize=40$, $cutoff=30$, $[m_l,m_u]=[1,4]$, and $nIterations=20$.

\SetKwRepeat{Do}{do}{while}
\SetKw{Continue}{continue}
\begin{algorithm}[h!tb]   
	\DontPrintSemicolon                 
	% and a label for \ref{} 
	\SetKwInOut{Input}{input}\SetKwInOut{Output}{output}
	\Input{instance $I=(G=(V,E), \mathbf{(c,w)})$ of the \WTDP, parameters $initialPopulationSize, populationSize, cutoff, [m_l,m_u], nIterations$}
	\Output{total dominating set $D^H$}
	$population \gets \emptyset$\label{alg:grasp1} \;
\For{$i=1,\ldots, initialPopulationSize$}
{
	$newD \gets \texttt{GRASP}(I, cutoff)$\;
	\If{there is no solution with the same objective value and size as $newD$ in $population$}
	{
		$population \gets population \cup newD$\label{alg:grasp2} \;	
	}
}
$population \gets \texttt{select}(population,populationSize)$ \;
\For{$i=1,\ldots, nIterations$}
{
\For{all pairs $D^1,D^2$ from $population$}
{
$newD \gets \texttt{crossover}(D^1,D^2, cutoff)$\;
$newD \gets \texttt{mutation}(newD,[m_l,m_u])$\;
$newD \gets \texttt{localSearch}(newD)$\;
\If{there is no solution with the same objective value and size as $newD$ in $population$}
{
$population \gets population \cup newD$ \;	
}
}
$population \gets \texttt{select}(population,populationSize)$\;
}
$D^H\gets \texttt{select}(population,1)$\;
	\caption{Genetic algorithm\label{alg:genetic} }
\end{algorithm}

\section{Computational results \label{sec:compres}}

The branch-and-cut framework was implemented in C++ using CPLEX 12.9 as MIP solver and the genetic algorithm was also implemented in C++.
The computational study was carried out on an Intel Xeon E5 v4 CPU with 2.5 GHz and 6GB memory using a single thread. All CPLEX parameters were left at default values (except branching priorities, see Section~\ref{sec:bra}), and we set the timelimit for a run to 1800 seconds (similar to the timelimit in~\cite{MaEtAl2019}). 

\subsection{Instance description}
\label{subsec:instdesc}

\paragraph{Instances similar to the instances of~\cite{MaEtAl2019}} 
In~\cite{MaEtAl2019}, the authors created instances to test their formulations. 
Unfortunately, the instances are not available online, thus we generated our own, following the same procedure as described in~\cite{MaEtAl2019}.
These instances are generated according to the 
Erd\"os-R\'enyi model, where one fixes the number of nodes, $|V|$ and a probability $p\in[01]$ that allows to control  the edge density of the resulting graph. Edge and vertex weights,
$\mathbf{c}$ and $\mathbf{w}$, respectively, are random integers between one and five. 
As in~\cite{MaEtAl2019}, we considered $|V|\in\{20,50,100\}$ and $p\in\{0.2,0.5,0.8\}$,
which leads to instance ranging from $20$ nodes and $31$ edges to $100$ nodes and $3943$ edges.
For each pair $(n,p)$ we generated five instances (instead of one as done in~\cite{MaEtAl2019}), using the \texttt{gnp\_random\_graph(n,p)}-method from the \texttt{networkx}-package~\cite{hagberg2008exploring} to obtain the Erd\"os-R\'enyi graphs. This set has $3 \cdot 3 \cdot 5 = 45$ instances.
We denote this set of instances as \MAINST, individual instances are addressed as \MAINST$-|V|-p-id$, where $id \in \{1,\ldots, 5\}$.

\paragraph{New instances} 
To analyze the influence of different weight structures, we generated an additional set of instances, denoted as \NEWINST, as in the \MAINST\, both are in a similar (small) range.
%
%While the instance set \MAINST\ already is quite challenging for the algorithms of~\cite{MaEtAl2019}, 
%our computations revealed that nearly all of them could solved by our approaches (see Section~\ref{sec:detailed} for details). 
%We thus generated additional instances to challenge our algorithms. 
We again used the Erd\"os-R\'enyi model, and considered $|V|\in\{75,100,125\}$ and $p\in\{0.2,0.5,0.8\}$. 
%In contrast to \MAINST, we used a larger range for the edge and node weights, and also tried different ranges for both sets of weights.
%In particular,
We used the following range-combinations for $(\mathbf{c},\mathbf{w})$: $([1,50],[1,10])$, $([1,25],[1,25])$,$([1,10],[1,50])$. 
For each combination of $(|V|,p)$ and $(\mathbf{c},\mathbf{w})$ we created five instances. 
Thus, this set has $3 \cdot 3 \cdot 3 \cdot 5 = 135$ instances.
The instance set is denoted as \NEWINST, individual instances are addressed as \NEWINST$-|V|-p-c_u-id$, 
where $id \in \{1,\ldots, 5\}$ and $c_u$ is the upper bound of the considered range for $\mathbf{c}$ (i.e., $c_u \in \{10,25,50\}$). 

Both sets of instances we created are available online at \url{https://msinnl.github.io/pages/instancescodes.html}\sloppy.

%\paragraph{Pinacho et al. 2018 Instances}
%We also considered the instances used in~\cite{PINACHODAVIDSON2018860}
%for the closely related Weighted Independent Domination Problem
%(whose input data es the same as for the WTDP),
%which were made available online by the authors in \url{https://www.iiia.csic.es/~christian.blum/research.html.}
%
%In this case, there are two classes of graphs, random
%and random geometric graphs,
%whose sizes range from 100 nodes and 249 edges to 1000 nodes and 125098 edges in the case of the first class, and 100 nodes and 256 edges to 1000 nodes and 132297 edges for the second class.
%As explained by the authors, the main difference between random geometric graphs and random graphs is that in the former ones
% only nodes that are placed close together may be connected while in the latter ones any
%pair nodes may be connected.
%
%Three different schemes for generating the node and edge weights are considered.
%In the first scheme, node weights $\mathbf{w}$ and edge costs $\mathbf{c}$ 
%are drawn uniformly at random from
%$\{0,...,100\}$ (\textit{neutral} graphs). 
%In the second
%scheme, weights $\mathbf{w}$ and costs  $\mathbf{c}$ are drawn uniformly at random from $\{0,...,10\}$  and $\{0,...,1000\}$, respectively (\textit{node oriented} graphs).
%And in the third scheme, weights $\mathbf{w}$ and costs  $\mathbf{c}$ are drawn uniformly at random from $\{0,...,1000\}$  and $\{0,...,10\}$, respectively (\textit{edges oriented} graphs).

\subsection{Assessing the effect of the valid inequalities \label{sec:ingredients}}

We now analyze the effect of the families of valid inequalities presented in Section~\ref{sec:form}.
In order to test this, we added them individually to the corresponding model of the LP-relaxation of \ffirst\ and \fsecond, and then also added all of them together.
% We explicitly used the LP-relaxation, as when solving MIPs, CPLEX also adds its general purpose cuts.
There is an exponential number of cliques; therefore, in order to get an impression of the effect of clique inequalities~\eqref{eq:clique},  we heuristically calculate edge clique covers (i.e., set of cliques, such that every edge occurs in one of the cliques) using our separation heuristic described in Section~\ref{sec:sep} (using the vertex-degree as sorting criteria), and add the corresponding inequalities induced by the cliques in this cover. 

In figure~\ref{fig:lpplot3}, we give a plot of the obtained LP gaps (over both instance sets), calculated as $100 \cdot (w^B-w^{LP})/w^B$, where $w^B$ is the best solution value we obtained using our approaches, and $w^{LP}$ is the value of the considered LP relaxation. Note that the figure does not give a plot for \ffirst+\eqref{eq:wtd43lifted},\fsecond,\fsecond+\eqref{eq:f2qlifted},\fsecond+\eqref{eq:domobj}, \fsecond+\eqref{eq:clique}. This is, because using the liftings~\eqref{eq:wtd43lifted}, resp.,~\eqref{eq:f2qlifted} on their own had no effect on the value of the LP relaxation. Moreover, the values obtained by \fsecond,\fsecond+\eqref{eq:domobj}, \fsecond+\eqref{eq:clique} are just the same as their counterpart with \ffirst, as in \fsecond, the $\mathbf{z}$-variables are projected out in a Benders way, and~\eqref{eq:domobj} and~\eqref{eq:clique} operate in the $(\mathbf{x},\mathbf{y})$-space. 
On the other hand, there is a slight difference between \ffirst+all and \fsecond+all, with \ffirst+all giving slightly lower gaps. An explanation for this is, that once inequalities~\eqref{eq:domobj} and~\eqref{eq:clique} are present in the model, the liftings~\eqref{eq:wtd43lifted}, resp.,~\eqref{eq:f2qlifted} also start to have an effect on the bounds (as both~\eqref{eq:domobj} and~\eqref{eq:clique} "push" the $\mathbf{y}$-variables, which are the variables added in the lifting).

Overall, we see that both~\eqref{eq:domobj} and~\eqref{eq:clique} on their own result in a considerable improvement of the LP gap, e.g., without any inequalities only around 20\% of the instances have an LP gap of 20\% or less, while adding~\eqref{eq:domobj} or~\eqref{eq:clique} increases the number of instances with such a gap to about 30\%. 
Adding all families together gives again a considerable improvement, now for about 50\% of instances, the LP gap is 20\% or less. When adding all inequalities, the largest LP gap is around 50\%, while without adding any inequalities, the largest gap is over 70\%.

%Such strategy does not separate all the clique inequalities, as they are exponential in number,
%but admits a relatively simple implementation.
\begin{figure}[h!tb]
	%\begin{subfigure}[b]{\linewidth}
%		\centering \includegraphics[width=.80\linewidth]{}
%		\caption{LP-gap plot for adding families of valid inequalities to \ffirst}\label{fig:lpplot1}
%		
%			\centering \includegraphics[width=.80\linewidth]{}
%		\caption{LP-gap plot for adding families of valid inequalities to \fsecond}\label{fig:lpplot2}
				\centering \includegraphics[width=.80\linewidth]{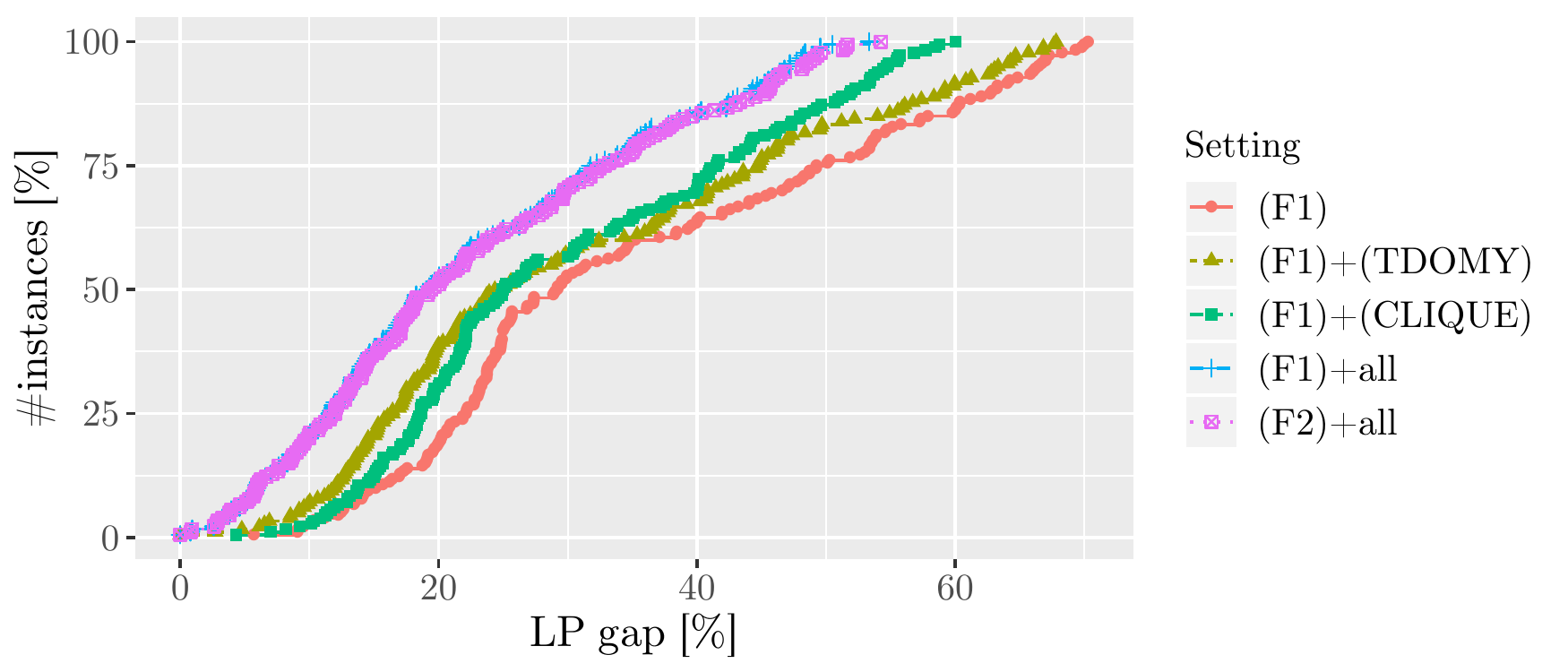}
	\caption{LP-gap plot for adding families of valid inequalities to \ffirst\ and \fsecond.}\label{fig:lpplot3}	
	%\end{subfigure}%
	%\caption{LP-gap plots for adding families of valid inequalities to our formulations}\label{fig:lpplot}
\end{figure}
%We tested the following settings for \ffirst:
%\begin{itemize}
%	\item a
%\end{itemize}

%As we have shown, the devised setting is quite sensitive to the cuts that are added during the branching phase. 
%The role of cut selection in the performance
%of the iterative cutting plane generation schemes has been analyzed in the literature; 
%e.g., in~\citep{Dey2018,WesselmannStuhl2012} one can find recent works on theoretical and computational studies on the challenges of cutting-plane selection.
%Furthermore, in the context of Benders decomposition, cut selection has been also investigated
%as reported in Section~7 of the recent review~\cite{rahmaniani2017benders}.
%Due to this sensitivity, we have enabled the cut-filtering feature of CPLEX and restricted
%to 10 the number of cut-generation rounds at each iteration of the separation method.

%\clearpage
\subsection{Detailed results \label{sec:detailed}}

In this section, we provide detailed results obtained by the branch-and-cut frameworks based on the formulations \ffirst\ and \fsecond\ described in Section~\ref{sec:bc}, and also the genetic algorithm. 
A comparison with the models of~\cite{MaEtAl2019} is also done. 

In Table~\ref{ta:comp} we report the following results for instance set \MAINST.
In columns \ffirst+ and \fsecond+, we report the results attained by our branch-and-cut algorithms.
In columns \ffirst\ and \fsecond\, we report the  results attained 
when solving \ffirst\ and \fsecond\ directly with CPLEX, without any of the branch-and-cut ingredients presented in Section~\ref{sec:bc}  (note that in case of \fsecond, constraints~\ref{eq:f2q} are separated as they are needed to ensure correctness).
In columns \maone, \matwo, and \mathree\,
we report the results attained when solving directly by CPLEX
the formulations provided in~\cite{MaEtAl2019}.
In this table, we report for each approach the runtime (column $t[s]$), the objective value of the best obtained solution (column $w^B$) and the optimality gap (column $g[\%]$, calculated as $100\cdot(w^B-LB)/w^B$, where $LB$ is the obtained lower bound).

The results reported in Table~\ref{ta:comp} show,
that the approaches proposed in this paper (i.e., \ffirst, \fsecond, \ffirst+ and \fsecond+),
are considerably more effective than those proposed in~\cite{MaEtAl2019}:
All of our approaches, with the exception of \ffirst, managed to solve all the instances within the timelimit, while none of the approaches  \maone, \matwo, and \mathree\ managed to solve the instances with 100 nodes to optimality (and \maone, \matwo\ also fail for some of the instances with 50 nodes).
Moreover, for the instances where our approaches as well as the approaches of~\cite{MaEtAl2019} can solve the problem,
our approaches are up to 500 times faster; see, e.g, instances \texttt{MA-50-0.5-4}, where \fsecond+ takes 2 seconds, while \mathree\ takes 1201 seconds (and \maone, \matwo\ reach the timelimit). 
Likewise,the gaps attained by our strategies (when the time limit is reached),
are considerable smaller than those attained by the~\cite{MaEtAl2019} approaches 
(for instance, while the maximum gap attained by \ffirst\ is 9.06\%, the maximum gap attained by \mathree\ is 72.53\%).
When comparing among our approaches, we see that for all but two instances (\texttt{MA-100-0.8-3} and \texttt{MA-100-0.8-4}), \fsecond+ is the fastest, and for these two instances \fsecond\ is the fastest. 
Not surprisingly, the instances become harder with larger number of vertices, and, also higher density ($p$) seems to make the instances harder.

%First, the time limit is reached by far more cases when using \maone, \matwo, and \mathree\ (specially \maone),
%than when using our approaches \ffirst, \fsecond, \ffirst+ and \fsecond+, in particular, 

%furthermore, when the time limit is not reached,
%the running times of our approaches (in particular those associated to \ffirst+ and \fsecond+)
%are considerably better than those the~\cite{MaEtAl2019} approaches.
%Second, the gaps attained by our strategies (when the time limit is reached),
%are considerable smaller than those attained by the~\cite{MaEtAl2019} approaches 
%(for instance, while the maximum gap attained by \ffirst\ is 9.06\%, the maximum gap attained by \mathree\ is 72.53\%).
%Complementary, when comparing \ffirst, \fsecond, \ffirst+ and \fsecond+

\setlength{\tabcolsep}{3pt}
\begin{table}[t!]
	\centering
	\caption{Comparison with previous approaches \maone, \matwo, \mathree\ from literature on instance set \MAINST \label{ta:comp}} 
	\begingroup\footnotesize
	\begin{tabular}{rrr|rrrrrrrrrrrr|rrrrrrrrr}
		\toprule
		\multicolumn{3}{c}{instance} & \multicolumn{3}{c}{\ffirst} & \multicolumn{3}{c}{\ffirst+} & \multicolumn{3}{c}{\fsecond} & \multicolumn{3}{c}{\fsecond+} & \multicolumn{3}{c}{\maone} & \multicolumn{3}{c}{\matwo} & \multicolumn{3}{c}{\mathree}  \\ $|V|$ & $p$ & $id$ & $t[s]$ & $w^*$ & g[\%] & $t[s]$ & $w^*$ & g[\%]& $t[s]$ & $w^*$ & g[\%]& $t[s]$ & $w^*$ & g[\%]& $t[s]$ & $w^*$ & g[\%]& $t[s]$ & $w^*$ & g[\%]& $t[s]$ & $w^*$ & g[\%] \\ \midrule
		20 & 0.2 & 1 & \textbf{1} & 63 & 0.00 & \textbf{1} & 63 & 0.00 & \textbf{1} & 63 & 0.00 & \textbf{1} & 63 & 0.00 & 2 & 63 & 0.00 & 3 & 63 & 0.00 & 2 & 63 & 0.00 \\ 
		20 & 0.2 & 2 & \textbf{1} & 58 & 0.00 & \textbf{1} & 58 & 0.00 & \textbf{1} & 58 & 0.00 & \textbf{1} & 58 & 0.00 & 3 & 58 & 0.00 & 3 & 58 & 0.00 & \textbf{1} & 58 & 0.00 \\ 
		20 & 0.2 & 3 & 3 & 58 & 0.00 & \textbf{1} & 58 & 0.00 & \textbf{1} & 58 & 0.00 & \textbf{1} & 58 & 0.00 & 3 & 58 & 0.00 & 3 & 58 & 0.00 & \textbf{1} & 58 & 0.00 \\ 
		20 & 0.2 & 4 & \textbf{1} & 51 & 0.00 & \textbf{1} & 51 & 0.00 & \textbf{1} & 51 & 0.00 & \textbf{1} & 51 & 0.00 & 4 & 51 & 0.00 & 3 & 51 & 0.00 & 2 & 51 & 0.00 \\ 
		20 & 0.2 & 5 & \textbf{1} & 55 & 0.00 & \textbf{1} & 55 & 0.00 & \textbf{1} & 55 & 0.00 & \textbf{1} & 55 & 0.00 & 3 & 55 & 0.00 & 4 & 55 & 0.00 & \textbf{1} & 55 & 0.00 \\ 
		20 & 0.5 & 1 & \textbf{1} & 44 & 0.00 & \textbf{1} & 44 & 0.00 & \textbf{1} & 44 & 0.00 & \textbf{1} & 44 & 0.00 & 5 & 44 & 0.00 & 4 & 44 & 0.00 & \textbf{1} & 44 & 0.00 \\ 
		20 & 0.5 & 2 & \textbf{1} & 47 & 0.00 & \textbf{1} & 47 & 0.00 & \textbf{1} & 47 & 0.00 & \textbf{1} & 47 & 0.00 & 4 & 47 & 0.00 & 5 & 47 & 0.00 & \textbf{1} & 47 & 0.00 \\ 
		20 & 0.5 & 3 & \textbf{1} & 46 & 0.00 & \textbf{1} & 46 & 0.00 & \textbf{1} & 46 & 0.00 & \textbf{1} & 46 & 0.00 & 6 & 46 & 0.00 & 4 & 46 & 0.00 & \textbf{1} & 46 & 0.00 \\ 
		20 & 0.5 & 4 & \textbf{1} & 40 & 0.00 & \textbf{1} & 40 & 0.00 & \textbf{1} & 40 & 0.00 & \textbf{1} & 40 & 0.00 & 4 & 40 & 0.00 & 3 & 40 & 0.00 & \textbf{1} & 40 & 0.00 \\ 
		20 & 0.5 & 5 & \textbf{1} & 41 & 0.00 & \textbf{1} & 41 & 0.00 & \textbf{1} & 41 & 0.00 & \textbf{1} & 41 & 0.00 & 4 & 41 & 0.00 & 3 & 41 & 0.00 & \textbf{1} & 41 & 0.00 \\ 
		20 & 0.8 & 1 & \textbf{1} & 37 & 0.00 & \textbf{1} & 37 & 0.00 & \textbf{1} & 37 & 0.00 & \textbf{1} & 37 & 0.00 & 3 & 37 & 0.00 & 4 & 37 & 0.00 & 2 & 37 & 0.00 \\ 
		20 & 0.8 & 2 & 3 & 35 & 0.00 & \textbf{1} & 35 & 0.00 & \textbf{1} & 35 & 0.00 & \textbf{1} & 35 & 0.00 & 4 & 35 & 0.00 & 4 & 35 & 0.00 & \textbf{1} & 35 & 0.00 \\ 
		20 & 0.8 & 3 & \textbf{1} & 40 & 0.00 & \textbf{1} & 40 & 0.00 & \textbf{1} & 40 & 0.00 & \textbf{1} & 40 & 0.00 & 5 & 40 & 0.00 & 4 & 40 & 0.00 & \textbf{1} & 40 & 0.00 \\ 
		20 & 0.8 & 4 & \textbf{1} & 34 & 0.00 & \textbf{1} & 34 & 0.00 & \textbf{1} & 34 & 0.00 & \textbf{1} & 34 & 0.00 & 4 & 34 & 0.00 & 5 & 34 & 0.00 & \textbf{1} & 34 & 0.00 \\ 
		20 & 0.8 & 5 & \textbf{1} & 34 & 0.00 & \textbf{1} & 34 & 0.00 & \textbf{1} & 34 & 0.00 & \textbf{1} & 34 & 0.00 & 5 & 34 & 0.00 & 3 & 34 & 0.00 & \textbf{1} & 34 & 0.00 \\ 
		\midrule
		50 & 0.2 & 1 & 2 & 111 & 0.00 & \textbf{1} & 111 & 0.00 & 4 & 111 & 0.00 & \textbf{1} & 111 & 0.00 & 991 & 111 & 0.00 & 216 & 111 & 0.00 & 229 & 111 & 0.00 \\ 
		50 & 0.2 & 2 & 3 & 106 & 0.00 & \textbf{1} & 106 & 0.00 & 3 & 106 & 0.00 & \textbf{1} & 106 & 0.00 & 1380 & 106 & 0.00 & 438 & 106 & 0.00 & 309 & 106 & 0.00 \\ 
		50 & 0.2 & 3 & 8 & 111 & 0.00 & \textbf{1} & 111 & 0.00 & 4 & 111 & 0.00 & \textbf{1} & 111 & 0.00 & TL & 114 & 10.40 & 755 & 111 & 0.00 & 552 & 111 & 0.00 \\ 
		50 & 0.2 & 4 & 4 & 101 & 0.00 & \textbf{1} & 101 & 0.00 & 4 & 101 & 0.00 & \textbf{1} & 101 & 0.00 & 796 & 101 & 0.00 & 322 & 101 & 0.00 & 409 & 101 & 0.00 \\ 
		50 & 0.2 & 5 & 13 & 108 & 0.00 & \textbf{1} & 108 & 0.00 & 6 & 108 & 0.00 & \textbf{1} & 108 & 0.00 & TL & 108 & 12.19 & 1565 & 108 & 0.00 & 1129 & 108 & 0.00 \\ 
		50 & 0.5 & 1 & 4 & 82 & 0.00 & 3 & 82 & 0.00 & 3 & 82 & 0.00 & \textbf{2} & 82 & 0.00 & TL & 82 & 19.75 & TL & 82 & 9.11 & 631 & 82 & 0.00 \\ 
		50 & 0.5 & 2 & 5 & 85 & 0.00 & \textbf{2} & 85 & 0.00 & 3 & 85 & 0.00 & \textbf{2} & 85 & 0.00 & TL & 88 & 19.39 & 1579 & 85 & 0.00 & 794 & 85 & 0.00 \\ 
		50 & 0.5 & 3 & 22 & 84 & 0.00 & 3 & 84 & 0.00 & 4 & 84 & 0.00 & \textbf{2} & 84 & 0.00 & TL & 87 & 18.31 & TL & 85 & 9.37 & 1082 & 84 & 0.00 \\ 
		50 & 0.5 & 4 & 16 & 82 & 0.00 & 3 & 82 & 0.00 & 4 & 82 & 0.00 & \textbf{2} & 82 & 0.00 & TL & 82 & 17.55 & TL & 82 & 14.90 & 1201 & 82 & 0.00 \\ 
		50 & 0.5 & 5 & 15 & 82 & 0.00 & 4 & 82 & 0.00 & 4 & 82 & 0.00 & \textbf{2} & 82 & 0.00 & TL & 83 & 20.30 & TL & 82 & 12.69 & 1062 & 82 & 0.00 \\ 
		50 & 0.8 & 1 & 6 & 77 & 0.00 & 7 & 77 & 0.00 & 5 & 77 & 0.00 & \textbf{4} & 77 & 0.00 & TL & 77 & 8.39 & 1063 & 77 & 0.00 & 876 & 77 & 0.00 \\ 
		50 & 0.8 & 2 & 3 & 72 & 0.00 & 3 & 72 & 0.00 & 3 & 72 & 0.00 & \textbf{2} & 72 & 0.00 & 452 & 72 & 0.00 & 307 & 72 & 0.00 & 299 & 72 & 0.00 \\ 
		50 & 0.8 & 3 & 3 & 74 & 0.00 & 3 & 74 & 0.00 & 4 & 74 & 0.00 & \textbf{2} & 74 & 0.00 & 1100 & 74 & 0.00 & 587 & 74 & 0.00 & 446 & 74 & 0.00 \\ 
		50 & 0.8 & 4 & 6 & 76 & 0.00 & 5 & 76 & 0.00 & 4 & 76 & 0.00 & \textbf{3} & 76 & 0.00 & TL & 76 & 10.23 & 1242 & 76 & 0.00 & 736 & 76 & 0.00 \\ 
		50 & 0.8 & 5 & 13 & 79 & 0.00 & 15 & 79 & 0.00 & \textbf{7} & 79 & 0.00 & \textbf{7} & 79 & 0.00 & TL & 79 & 16.13 & 1720 & 79 & 0.00 & 1310 & 79 & 0.00 \\ 
			\midrule
		100 & 0.2 & 1 & 898 & 175 & 0.00 & 92 & 175 & 0.00 & 566 & 175 & 0.00 & \textbf{50} & 175 & 0.00 & TL & 183 & 38.79 & TL & 178 & 34.82 & TL & 175 & 56.48 \\ 
		100 & 0.2 & 2 & 251 & 174 & 0.00 & 14 & 174 & 0.00 & 276 & 174 & 0.00 & \textbf{12} & 174 & 0.00 & TL & 174 & 34.17 & TL & 175 & 34.60 & TL & 188 & 61.21 \\ 
		100 & 0.2 & 3 & TL & 178 & 6.48 & 239 & 177 & 0.00 & 1434 & 177 & 0.00 & \textbf{121} & 177 & 0.00 & TL & 195 & 43.27 & TL & 189 & 41.47 & TL & 183 & 60.95 \\ 
		100 & 0.2 & 4 & TL & 169 & 2.17 & 81 & 169 & 0.00 & 562 & 169 & 0.00 & \textbf{37} & 169 & 0.00 & TL & 172 & 37.79 & TL & 175 & 36.77 & TL & 172 & 59.06 \\ 
		100 & 0.2 & 5 & TL & 171 & 5.39 & 97 & 167 & 0.00 & 1473 & 167 & 0.00 & \textbf{47} & 167 & 0.00 & TL & 170 & 39.18 & TL & 172 & 38.20 & TL & 173 & 60.06 \\ 
		100 & 0.5 & 1 & TL & 147 & 2.69 & 304 & 147 & 0.00 & 292 & 147 & 0.00 & \textbf{108} & 147 & 0.00 & TL & 166 & 51.92 & TL & 160 & 49.55 & TL & 149 & 62.37 \\ 
		100 & 0.5 & 2 & 707 & 144 & 0.00 & 158 & 144 & 0.00 & 152 & 144 & 0.00 & \textbf{51} & 144 & 0.00 & TL & 154 & 44.81 & TL & 148 & 42.81 & TL & 150 & 66.31 \\ 
		100 & 0.5 & 3 & 995 & 147 & 0.00 & 401 & 147 & 0.00 & 186 & 147 & 0.00 & \textbf{128} & 147 & 0.00 & TL & 157 & 48.58 & TL & 149 & 43.22 & TL & 160 & 62.88 \\ 
		100 & 0.5 & 4 & TL & 149 & 9.06 & 725 & 146 & 0.00 & 289 & 146 & 0.00 & \textbf{214} & 146 & 0.00 & TL & 156 & 50.99 & TL & 150 & 46.05 & TL & 160 & 63.98 \\ 
		100 & 0.5 & 5 & TL & 139 & 3.18 & 466 & 139 & 0.00 & 242 & 139 & 0.00 & \textbf{155} & 139 & 0.00 & TL & 148 & 49.27 & TL & 145 & 44.48 & TL & 152 & 71.38 \\ 
		100 & 0.8 & 1 & 1655 & 136 & 0.00 & 346 & 136 & 0.00 & 172 & 136 & 0.00 & \textbf{97} & 136 & 0.00 & TL & 150 & 55.44 & TL & 141 & 51.52 & TL & 136 & 62.64 \\ 
		100 & 0.8 & 2 & 759 & 140 & 0.00 & 894 & 140 & 0.00 & 283 & 140 & 0.00 & \textbf{249} & 140 & 0.00 & TL & 146 & 49.55 & TL & 141 & 44.35 & TL & 147 & 65.31 \\ 
		100 & 0.8 & 3 & 1212 & 141 & 0.00 & 1032 & 141 & 0.00 & \textbf{236} & 141 & 0.00 & 325 & 141 & 0.00 & TL & 144 & 53.13 & TL & 149 & 50.87 & TL & 153 & 71.51 \\ 
		100 & 0.8 & 4 & TL & 141 & 7.45 & 1652 & 141 & 0.00 & \textbf{334} & 141 & 0.00 & 495 & 141 & 0.00 & TL & 148 & 52.31 & TL & 147 & 50.83 & TL & 142 & 71.17 \\ 
		100 & 0.8 & 5 & 990 & 134 & 0.00 & 509 & 134 & 0.00 & 231 & 134 & 0.00 & \textbf{160} & 134 & 0.00 & TL & 152 & 55.60 & TL & 148 & 51.19 & TL & 156 & 72.53 \\ 
		\bottomrule
	\end{tabular}
	\endgroup
\end{table}

\pagebreak

In the following, we focus on the instance set \NEWINST\ and our solution algorithms to get more insights on the performance of them, in particular, their behavior with respect to different weight structures. 
In Figure~\ref{fig:ilpplot}, we present plots of runtimes to optimality, and optimality gaps (for the unsolved instances of the respective approaches) for \ffirst\, \fsecond, \ffirst+ and \fsecond+. 
Figures~\ref{fig:ilpplot1} and~\ref{fig:ilpplot2} give runtimes optimality gaps, respectively, 
for the complete set of instances \NEWINST, while Figures~\ref{fig:ilpplot3}-\ref{fig:ilpplot7} 
show results for the different weight structure, i.e., 
Figures~\ref{fig:ilpplot3} and~\ref{fig:ilpplot4} are for the instances with $c_u=10$, 
Figures~\ref{fig:ilpplot5} and~\ref{fig:ilpplot6} are for the instances with $c_u=25$ and 
Figure~\ref{fig:ilpplot7} is for the instances with $c_u=50$ (since all instances are solved to optimality we do not provide an optimality gap plot). 
Note the different scales on the x-axis of Figure~\ref{fig:ilpplot7} compared to the other runtime-plots.

\begin{figure}[h!]
	\begin{subfigure}[b]{.5\linewidth}
		\centering \includegraphics[width=.99\linewidth]{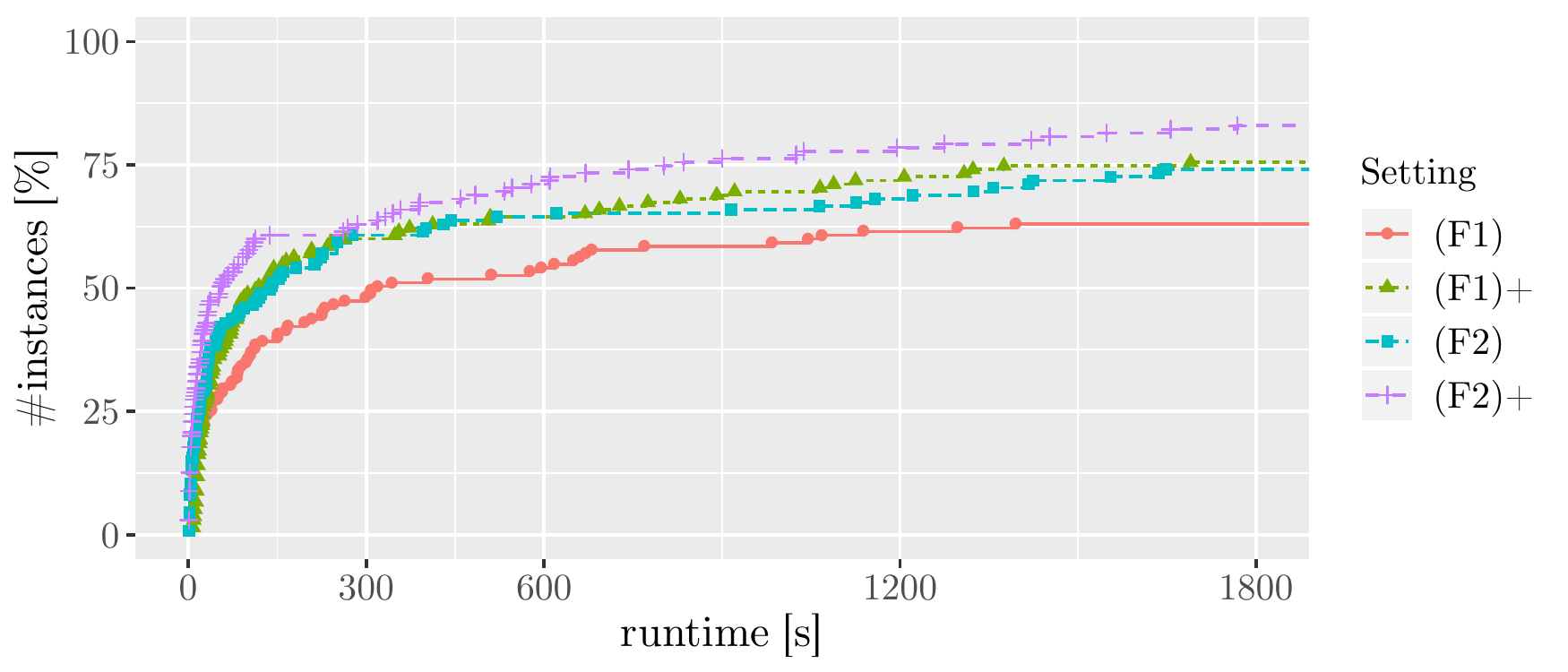}
		\caption{Runtime plot for all instances of the set}\label{fig:ilpplot1}
	\end{subfigure}%
	\begin{subfigure}[b]{.5\linewidth}
		\centering
		\centering \includegraphics[width=.99\linewidth]{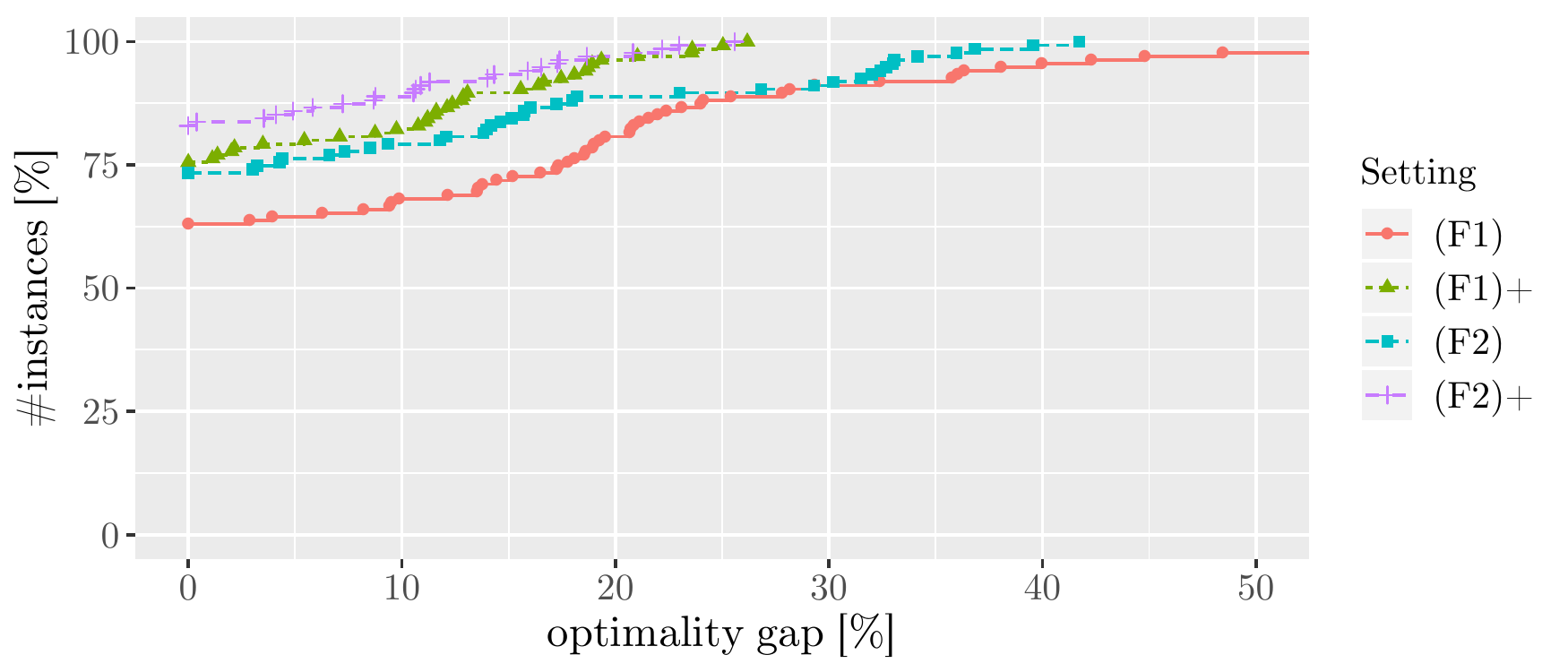}
		\caption{Optimality gap plot for all instances of the set}\label{fig:ilpplot2}
	\end{subfigure}
\newline
	\begin{subfigure}[b]{.5\linewidth}
		\centering
		\includegraphics[width=.99\linewidth]{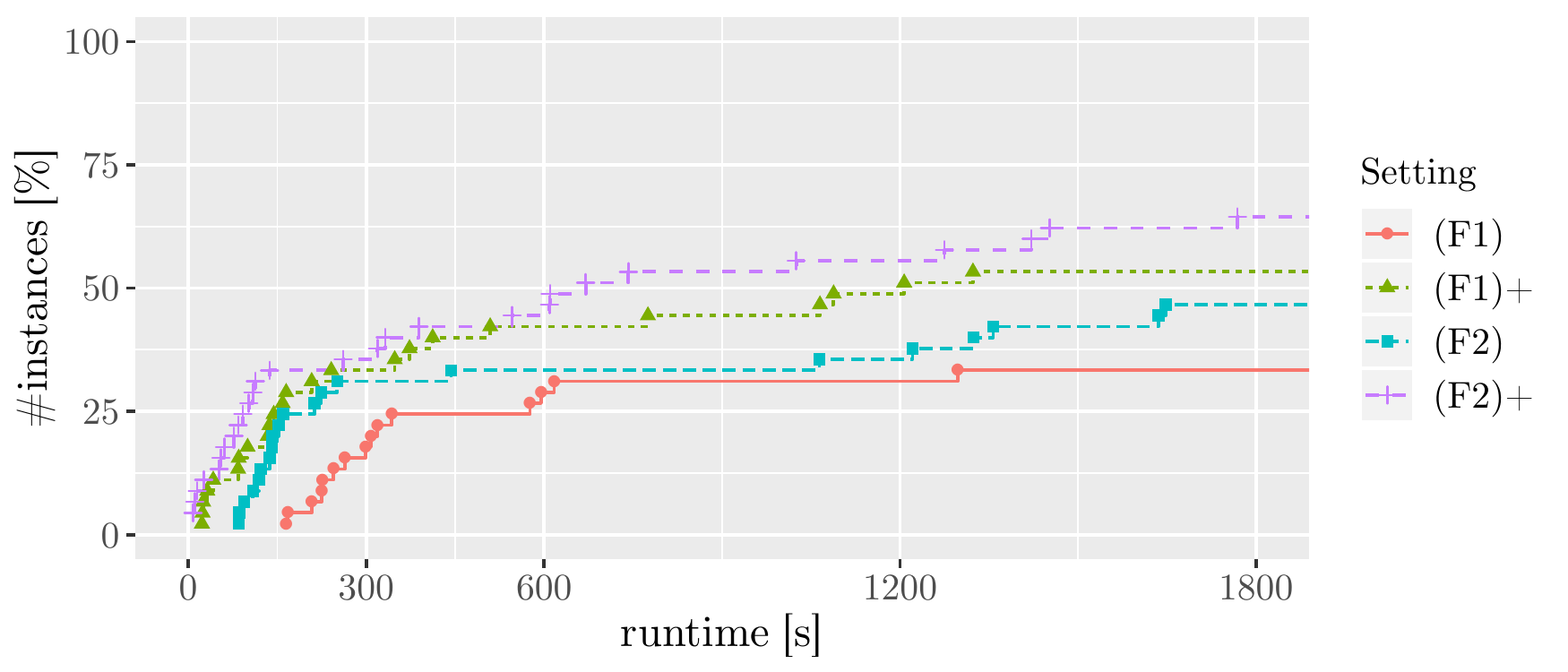}
		\caption{Runtime plot for instances with $c_u=10$}\label{fig:ilpplot3}
	\end{subfigure}%
	\begin{subfigure}[b]{.5\linewidth}
		\centering
		\includegraphics[width=.99\linewidth]{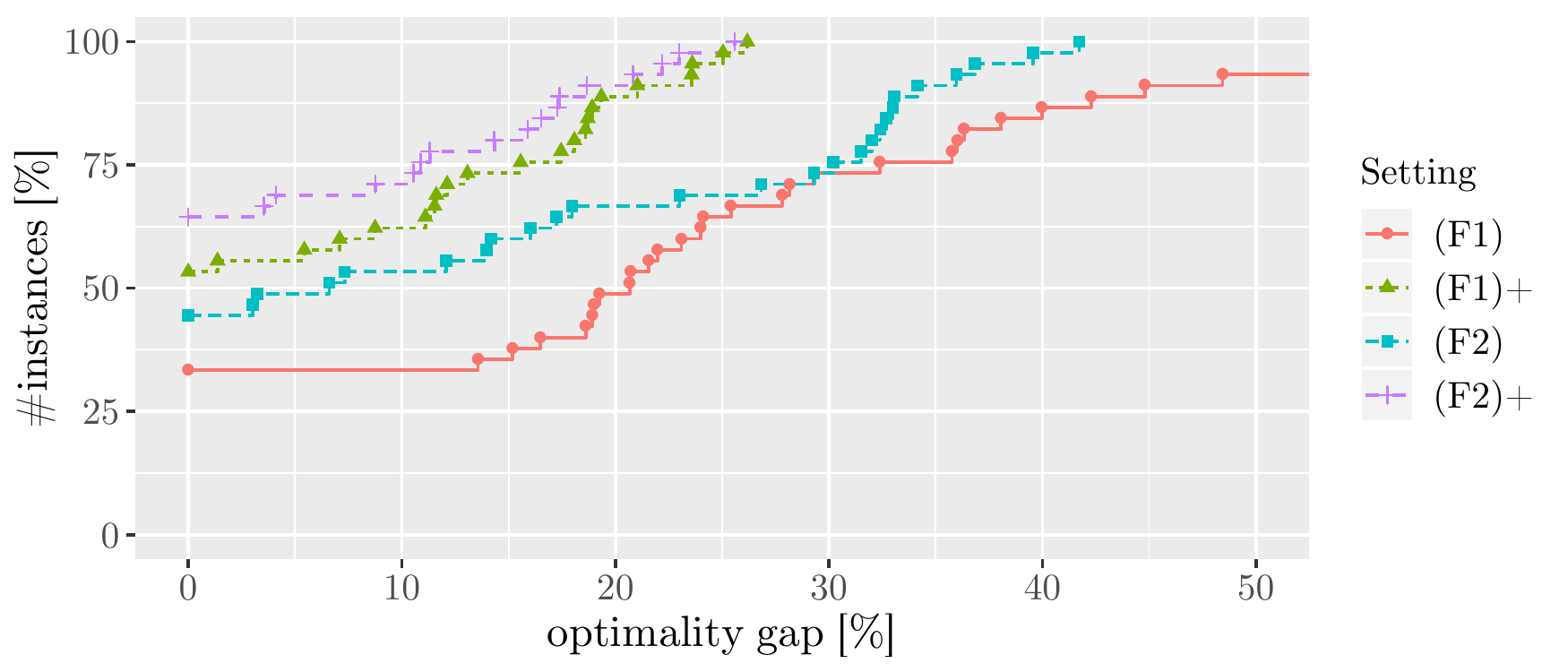}
		\caption{Optimality gap plot for instances with $c_u=10$}\label{fig:ilpplot4}
	\end{subfigure}
\newline
\begin{subfigure}[b]{.5\linewidth}
	\centering
	\includegraphics[width=.99\linewidth]{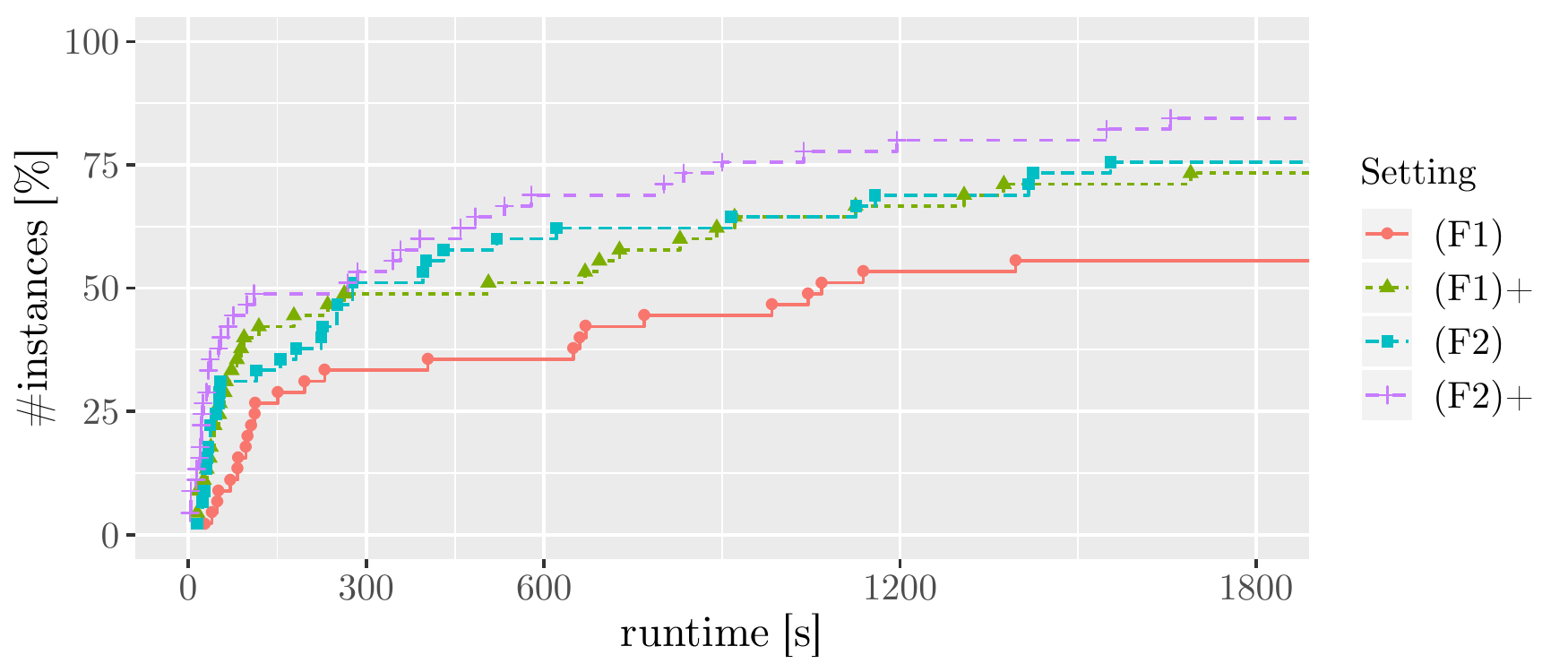}
	\caption{Runtime plot for instances with $c_u=25$}\label{fig:ilpplot5}
\end{subfigure}%
\begin{subfigure}[b]{.5\linewidth}
	\centering
	\includegraphics[width=.99\linewidth]{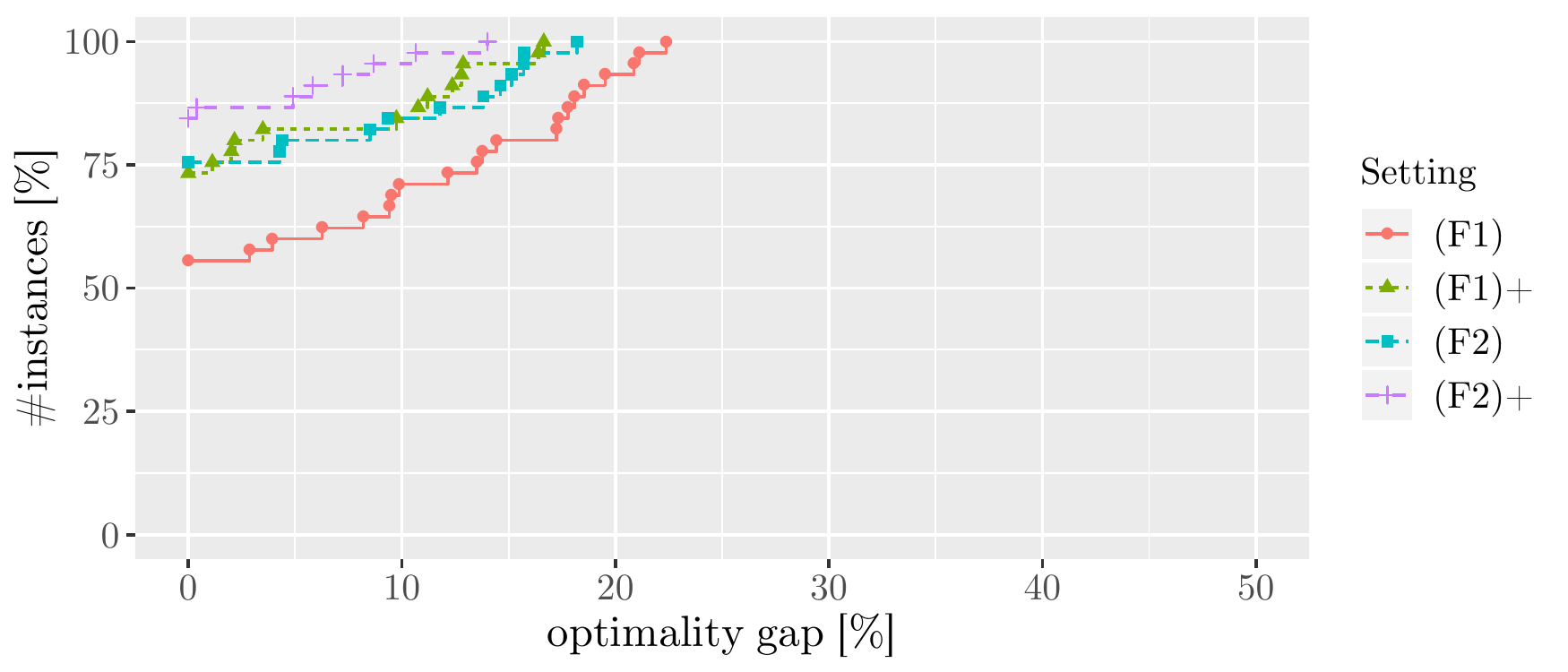}
	\caption{Optimality gap plot for instances with $c_u=25$}\label{fig:ilpplot6}
\end{subfigure}
\newline
	\centering
\begin{subfigure}[b]{.5\linewidth}
	\centering
	\includegraphics[width=.99\linewidth]{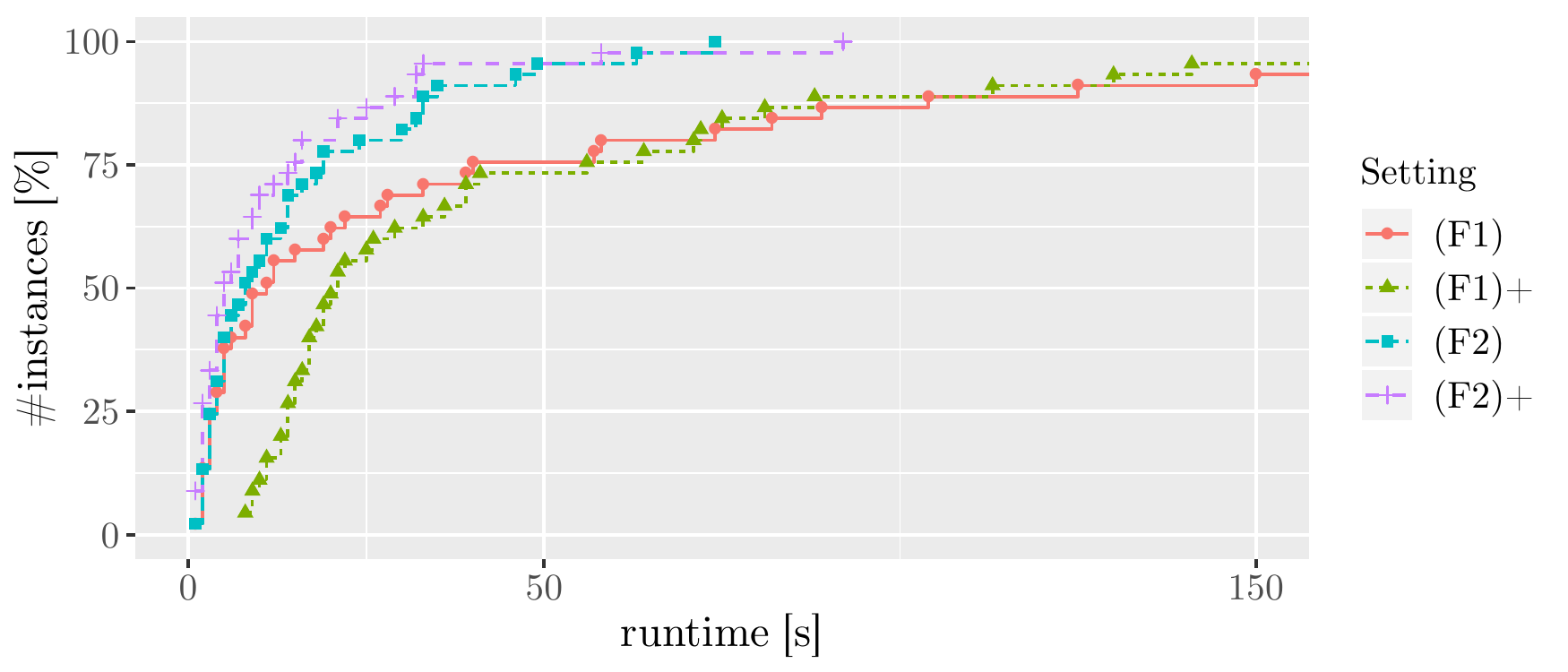}
	\caption{Runtime plot for instances with $c_u=50$ (all solved to optimality with all approaches, but x-axis cut off at 150 seconds for better readability.)}\label{fig:ilpplot7}
\end{subfigure}%

	\caption{Plots of runtimes and optimality gap of our MIP-approaches on instance set \NEWINST\ and different subgroups of these instances}\label{fig:ilpplot}
\end{figure}

In the plots shown in Figure~\ref{fig:ilpplot} we see a strong connection between the weight structure and the computational difficulty of the instances. 
All instances with $c_u=50$ can be solved by all approaches within the timelimit;
furthermore, \fsecond\ and \fsecond+ only need at most 100 seconds. 
However, for both $c_u=25$ and $c_u=10$, the situation is strikingly different, in particular, for $c_u=10$;
we can observe that only \ffirst+ and \fsecond+\ manage to solve over 50\% of the instances within the timelimit.
This behavior might be explained by the fact that, for these instances, edges play a more important role (due to their larger weight range), and thus the problem becomes more similar to a BQP problems and,
as it has been shown previously in the literature,
problems where a BQP structure plays an important role a often very hard to solve (see, e.g.,~\cite{billionnet2005different}). 
Such hypothesis could be validated by the fact that for these instances, the \ffirst+ approach, 
which includes the valid inequalities (in particular BQP-like inequalities~\ref{eq:clique}) is the approach working second best (not only when looking at the runtime, but also when looking at the optimality gap for the unsolved instances). 
Moreover, for instances with $c_u=50$ (where edge weights are less influential), 
the second best approach is \fsecond, which does not contain any valid inequalities. 
Despite these differences, when considering at all instances, we see that \fsecond+ works best, managing to solve around 80\% of the instances within the timelimit, followed by \fsecond\ and \ffirst+, which both manage to solve around 75\% of the instances.

The results reported in  Figure~\ref{fig:ilpplot} are complemented by Tables~\ref{ta:our75}-\ref{ta:our125}, 
where we give detailed results of our approaches, including the genetic algorithm (indicated by GA). 
Moreover, we also report the results obtained when running only the GRAP-part of the GA (i.e., lines~ \ref{alg:grasp1}-\ref{alg:grasp2} in Algorithm~\ref{alg:genetic}).
There is one table for each value of $|V|$ in order to allow an analysis from another point of view. 
In these tables, we present the runtime (column $t[s]$; 
TL indicates timelimit reached, and ML memorylimit), and the objective value of the best obtained solution (column $w^B$); 
for the MIP-approaches also the optimality gap (column $g[\%]$), and the number of branch-and-cut nodes (column $\#nBN$); 
and for the GA and GRASP also the primal gap compared to the best solution found by the MIP-approaches (column $pg[\%]$, calculated as $100 \cdot (w^H-w^{MIP})/w^{MIP}$, where $w^{MIP}$ is the value of the best solution found by the MIP-approaches and $w^H$ the value of the best solution found by the GRASP, resp., GA).

In the tables, we can see that all our approaches manage to solve all instances with $|V|=75$ to optimality. 
For instances with $|V|=100$, \fsecond+ solves all but two instances, and for instances with $|V|=125$, \fsecond+ solves 24 out of 45 instances to optimality within the timelimit. 
In general, for nearly all instances \fsecond+ works best, i.e., either it has the smallest runtime, or, for unsolved instances, it has the smallest optimality gap. 
For the instances, where \fsecond+ is not the best performing approach, \fsecond\ gives the best results. 
With respect to this, we can see that \fsecond\ only performs better for instances with $p=0.8$ (i.e., denser instances). 
A possible explanation for this could be, that for denser instances, the LPs with added valid inequalities becomes denser, in particular the clique inequalities, as there will also be a lot more cliques for denser graphs. 
Thus, while adding the valid inequalities improves the bound, the drawback of longer LP solution times and thus the slower node-throughput in the branch-and-cut becomes burdensome. 
This is also reflected in the number of branch-and-cut nodes enumerated, \fsecond\ often enumerates around ten times as much nodes as \fsecond+, 
while the runtime of both approaches is quite similar (and over all instances, adding the valid inequalities pays off, as only for the dense instances with $p=0.8$ the described drawback is having an effect). 
With respect to \ffirst\ and \ffirst+, the situation is similar, i.e., \ffirst\ has a considerably higher node-throughput, but in general \ffirst+ performs better. 
Moreover, when comparing \ffirst+\ and \fsecond+, it can be seen that \ffirst+\ usually needs less branch-and-cut nodes to prove optimality (when it manages to do so), but is slower than \fsecond+, as the "slimmer" formulation of \fsecond+ allows for a faster node-throughput (while still being "strong enough" for proving optimality). 
The largest optimality gap is 25.59\% and is obtained for instance \texttt{125-0.8-10-2}.

From the results reported in the tables, we can also conclude that both heuristics  perform quite well. 
The GRASP takes at most nine seconds (for some of the instances with $|V|=125$), and the largest primal gap around 20.21\% (instance \texttt{NEW-125-0.2-10-3}), while most of the primal gaps are smaller than 10\% and for slightly less than half of the instances, it is zero. The largest primal gaps are obtained for instances with $p=0.2$. 
Likewise, the GA takes at most 86 seconds (for instance \texttt{NEW-125-0.8-50-5}) and only for 30 out of 135 instances, there is a positive primal gap (the largest is 5.26\% for instance \texttt{NEW-75-0.2-10-4}, and for most instances with positive primal gap, the gap is under 1\%). 
Interestingly, for none of the unsolved instances, the GA could find an improved solution compared to the best solution found by the MIP approaches.

\begin{landscape}
\begin{table}[ht]
	\centering
	\caption{Comparison of our approaches on instance set \NEWINST\ with $|V|$=75. \label{ta:our75}} 
	\begingroup\footnotesize
	\begin{tabular}{rrr|rrrrrrrrrrrrrrrr|rrrrrr}
		\toprule
		\multicolumn{3}{c}{instance} & \multicolumn{4}{c}{\ffirst} & \multicolumn{4}{c}{\ffirst+} & \multicolumn{4}{c}{\fsecond} & \multicolumn{4}{c}{\fsecond+} & \multicolumn{3}{c}{GRASP} & \multicolumn{3}{c}{GA} \\ $p$ & $c_u$ & $id$ & $t[s]$ & $w^B$ & g[\%] &$\#nN$ &  $t[s]$ & $w^B$ & g[\%]&$\#nN$ &  $t[s]$ & $w^B$ & g[\%]& $\#nN$ & $t[s]$ & $w^B$ & g[\%]& $\#nN$ & $t[s]$ & $w^B$ & pg[\%]  & $t[s]$ & $w^B$ & pg[\%] \\ \midrule
		0.2 & 10 & 1 & 576 & 686 & 0.00 & 105831 & 32 & 686 & 0.00 & 1832 & 251 & 686 & 0.00 & 123502 & \textbf{15} & 686 & 0.00 & 1921 & 1 & 769 & 12.10 & 5 & 686 & 0.00 \\ 
		0.2 & 10 & 2 & 617 & 770 & 0.00 & 135320 & 25 & 770 & 0.00 & 1167 & 224 & 770 & 0.00 & 121349 & \textbf{8} & 770 & 0.00 & 1147 & 1 & 871 & 13.12 & 6 & 794 & 3.12 \\ 
		0.2 & 10 & 3 & 319 & 661 & 0.00 & 48274 & 23 & 661 & 0.00 & 665 & 85 & 661 & 0.00 & 46205 & \textbf{8} & 661 & 0.00 & 796 & 1 & 765 & 15.73 & 6 & 661 & 0.00 \\ 
		0.2 & 10 & 4 & 595 & 703 & 0.00 & 105611 & 42 & 703 & 0.00 & 2456 & 443 & 703 & 0.00 & 216827 & \textbf{26} & 703 & 0.00 & 2938 & 1 & 762 & 8.39 & 7 & 740 & 5.26 \\ 
		0.2 & 10 & 5 & 168 & 758 & 0.00 & 23620 & 24 & 758 & 0.00 & 1092 & 160 & 758 & 0.00 & 82778 & \textbf{11} & 758 & 0.00 & 1340 & 1 & 857 & 13.06 & 6 & 779 & 2.77 \\ 
		0.2 & 25 & 1 & 51 & 498 & 0.00 & 6617 & 16 & 498 & 0.00 & 263 & 37 & 498 & 0.00 & 14486 & \textbf{3} & 498 & 0.00 & 285 & 1 & 556 & 11.65 & 6 & 504 & 1.20 \\ 
		0.2 & 25 & 2 & 49 & 546 & 0.00 & 8616 & 16 & 546 & 0.00 & 170 & 37 & 546 & 0.00 & 18598 & \textbf{3} & 546 & 0.00 & 179 & 1 & 607 & 11.17 & 6 & 546 & 0.00 \\ 
		0.2 & 25 & 3 & 40 & 518 & 0.00 & 6505 & 15 & 518 & 0.00 & 157 & 27 & 518 & 0.00 & 10784 & \textbf{4} & 518 & 0.00 & 224 & 1 & 603 & 16.41 & 5 & 518 & 0.00 \\ 
		0.2 & 25 & 4 & 649 & 498 & 0.00 & 115335 & 36 & 498 & 0.00 & 3276 & 226 & 498 & 0.00 & 116161 & \textbf{19} & 498 & 0.00 & 3772 & 1 & 521 & 4.62 & 6 & 498 & 0.00 \\ 
		0.2 & 25 & 5 & 97 & 513 & 0.00 & 15696 & 15 & 513 & 0.00 & 245 & 54 & 513 & 0.00 & 23682 & \textbf{4} & 513 & 0.00 & 288 & 1 & 526 & 2.53 & 6 & 513 & 0.00 \\ 
		0.2 & 50 & 1 & 2 & 339 & 0.00 & 124 & 10 & 339 & 0.00 & 21 & 2 & 339 & 0.00 & 162 & \textbf{1} & 339 & 0.00 & 28 & 1 & 340 & 0.29 & 6 & 339 & 0.00 \\ 
		0.2 & 50 & 2 & 2 & 382 & 0.00 & 133 & 13 & 382 & 0.00 & 43 & 2 & 382 & 0.00 & 237 & \textbf{1} & 382 & 0.00 & 40 & 1 & 414 & 8.38 & 5 & 382 & 0.00 \\ 
		0.2 & 50 & 3 & \textbf{1} & 335 & 0.00 & 64 & 14 & 335 & 0.00 & 6 & \textbf{1} & 335 & 0.00 & 54 & \textbf{1} & 335 & 0.00 & 10 & 1 & 341 & 1.79 & 5 & 341 & 1.79 \\ 
		0.2 & 50 & 4 & 3 & 333 & 0.00 & 317 & 14 & 333 & 0.00 & 59 & 3 & 333 & 0.00 & 400 & \textbf{2} & 333 & 0.00 & 59 & 1 & 338 & 1.50 & 6 & 333 & 0.00 \\ 
		0.2 & 50 & 5 & 3 & 347 & 0.00 & 374 & 15 & 347 & 0.00 & 82 & 3 & 347 & 0.00 & 423 & \textbf{2} & 347 & 0.00 & 94 & 1 & 353 & 1.73 & 6 & 347 & 0.00 \\ 
		0.5 & 10 & 1 & 1297 & 581 & 0.00 & 99392 & 159 & 581 & 0.00 & 4820 & 213 & 581 & 0.00 & 62202 & \textbf{109} & 581 & 0.00 & 8222 & 1 & 590 & 1.55 & 13 & 581 & 0.00 \\ 
		0.5 & 10 & 2 & 299 & 602 & 0.00 & 30769 & 134 & 602 & 0.00 & 3840 & 152 & 602 & 0.00 & 41014 & \textbf{84} & 602 & 0.00 & 6719 & 1 & 641 & 6.48 & 11 & 602 & 0.00 \\ 
		0.5 & 10 & 3 & 226 & 545 & 0.00 & 19174 & 100 & 545 & 0.00 & 2739 & 141 & 545 & 0.00 & 35146 & \textbf{61} & 545 & 0.00 & 4960 & 1 & 545 & 0.00 & 10 & 545 & 0.00 \\ 
		0.5 & 10 & 4 & 264 & 540 & 0.00 & 25262 & 84 & 540 & 0.00 & 1960 & 109 & 540 & 0.00 & 28090 & \textbf{55} & 540 & 0.00 & 3797 & 1 & 580 & 7.41 & 10 & 540 & 0.00 \\ 
		0.5 & 10 & 5 & 165 & 519 & 0.00 & 13004 & 85 & 519 & 0.00 & 1803 & 119 & 519 & 0.00 & 28200 & \textbf{52} & 519 & 0.00 & 3291 & 1 & 551 & 6.17 & 10 & 519 & 0.00 \\ 
		0.5 & 25 & 1 & 106 & 387 & 0.00 & 7161 & 52 & 387 & 0.00 & 1269 & 31 & 387 & 0.00 & 7626 & \textbf{23} & 387 & 0.00 & 1598 & 1 & 402 & 3.88 & 10 & 387 & 0.00 \\ 
		0.5 & 25 & 2 & 71 & 384 & 0.00 & 5083 & 44 & 384 & 0.00 & 1194 & 24 & 384 & 0.00 & 5674 & \textbf{20} & 384 & 0.00 & 1458 & 1 & 413 & 7.55 & 10 & 384 & 0.00 \\ 
		0.5 & 25 & 3 & 83 & 362 & 0.00 & 4769 & 26 & 362 & 0.00 & 343 & 31 & 362 & 0.00 & 5898 & \textbf{13} & 362 & 0.00 & 442 & 1 & 380 & 4.97 & 10 & 362 & 0.00 \\ 
		0.5 & 25 & 4 & 100 & 366 & 0.00 & 7073 & 63 & 366 & 0.00 & 1833 & 52 & 366 & 0.00 & 14482 & \textbf{31} & 366 & 0.00 & 2551 & 1 & 371 & 1.37 & 9 & 371 & 1.37 \\ 
		0.5 & 25 & 5 & 84 & 331 & 0.00 & 4938 & 31 & 331 & 0.00 & 389 & 24 & 331 & 0.00 & 4688 & \textbf{14} & 331 & 0.00 & 470 & 1 & 331 & 0.00 & 10 & 331 & 0.00 \\ 
		0.5 & 50 & 1 & 5 & 240 & 0.00 & 348 & 16 & 240 & 0.00 & 77 & 4 & 240 & 0.00 & 424 & \textbf{3} & 240 & 0.00 & 97 & 1 & 244 & 1.67 & 9 & 240 & 0.00 \\ 
		0.5 & 50 & 2 & \textbf{2} & 238 & 0.00 & 43 & 11 & 238 & 0.00 & 28 & \textbf{2} & 238 & 0.00 & 68 & \textbf{2} & 238 & 0.00 & 30 & 1 & 245 & 2.94 & 9 & 238 & 0.00 \\ 
		0.5 & 50 & 3 & 2 & 215 & 0.00 & 0 & 8 & 215 & 0.00 & 0 & 2 & 215 & 0.00 & 47 & \textbf{1} & 215 & 0.00 & 0 & 1 & 215 & 0.00 & 9 & 215 & 0.00 \\ 
		0.5 & 50 & 4 & 8 & 235 & 0.00 & 553 & 14 & 235 & 0.00 & 141 & 5 & 235 & 0.00 & 703 & \textbf{4} & 235 & 0.00 & 134 & 1 & 235 & 0.00 & 9 & 235 & 0.00 \\ 
		0.5 & 50 & 5 & 4 & 206 & 0.00 & 198 & 11 & 206 & 0.00 & 47 & 3 & 206 & 0.00 & 198 & \textbf{2} & 206 & 0.00 & 47 & 1 & 206 & 0.00 & 8 & 206 & 0.00 \\ 
		0.8 & 10 & 1 & 343 & 571 & 0.00 & 24196 & 241 & 571 & 0.00 & 3009 & \textbf{137} & 571 & 0.00 & 23527 & \textbf{137} & 571 & 0.00 & 4798 & 2 & 613 & 7.36 & 16 & 571 & 0.00 \\ 
		0.8 & 10 & 2 & 208 & 520 & 0.00 & 12433 & 144 & 520 & 0.00 & 1481 & \textbf{86} & 520 & 0.00 & 14721 & 102 & 520 & 0.00 & 2583 & 2 & 520 & 0.00 & 15 & 520 & 0.00 \\ 
		0.8 & 10 & 3 & 245 & 543 & 0.00 & 13720 & 165 & 543 & 0.00 & 2105 & 122 & 543 & 0.00 & 19460 & \textbf{92} & 543 & 0.00 & 3475 & 2 & 543 & 0.00 & 15 & 543 & 0.00 \\ 
		0.8 & 10 & 4 & 308 & 571 & 0.00 & 17925 & 208 & 571 & 0.00 & 2722 & 142 & 571 & 0.00 & 27914 & \textbf{113} & 571 & 0.00 & 4340 & 2 & 571 & 0.00 & 15 & 571 & 0.00 \\ 
		0.8 & 10 & 5 & 225 & 509 & 0.00 & 10311 & 137 & 509 & 0.00 & 1374 & 94 & 509 & 0.00 & 15107 & \textbf{77} & 509 & 0.00 & 2412 & 2 & 509 & 0.00 & 17 & 509 & 0.00 \\ 
		0.8 & 25 & 1 & 196 & 357 & 0.00 & 6516 & 119 & 357 & 0.00 & 1527 & 53 & 357 & 0.00 & 7532 & \textbf{51} & 357 & 0.00 & 1957 & 2 & 360 & 0.84 & 15 & 357 & 0.00 \\ 
		0.8 & 25 & 2 & 151 & 338 & 0.00 & 4736 & 89 & 338 & 0.00 & 1119 & \textbf{34} & 338 & 0.00 & 4454 & \textbf{34} & 338 & 0.00 & 1201 & 2 & 356 & 5.33 & 15 & 338 & 0.00 \\ 
		0.8 & 25 & 3 & 28 & 323 & 0.00 & 1495 & 44 & 323 & 0.00 & 439 & \textbf{15} & 323 & 0.00 & 1568 & 22 & 323 & 0.00 & 549 & 2 & 323 & 0.00 & 13 & 323 & 0.00 \\ 
		0.8 & 25 & 4 & 113 & 345 & 0.00 & 4240 & 73 & 345 & 0.00 & 670 & 47 & 345 & 0.00 & 6870 & \textbf{37} & 345 & 0.00 & 947 & 2 & 345 & 0.00 & 13 & 345 & 0.00 \\ 
		0.8 & 25 & 5 & 112 & 311 & 0.00 & 3977 & 53 & 311 & 0.00 & 570 & 32 & 311 & 0.00 & 3900 & \textbf{25} & 311 & 0.00 & 747 & 2 & 311 & 0.00 & 15 & 311 & 0.00 \\ 
		0.8 & 50 & 1 & \textbf{2} & 182 & 0.00 & 29 & 8 & 182 & 0.00 & 13 & 4 & 182 & 0.00 & 136 & \textbf{2} & 182 & 0.00 & 16 & 2 & 182 & 0.00 & 14 & 182 & 0.00 \\ 
		0.8 & 50 & 2 & 3 & 188 & 0.00 & 61 & 9 & 188 & 0.00 & 33 & 3 & 188 & 0.00 & 86 & \textbf{2} & 188 & 0.00 & 30 & 2 & 188 & 0.00 & 11 & 188 & 0.00 \\ 
		0.8 & 50 & 3 & 3 & 191 & 0.00 & 64 & 9 & 191 & 0.00 & 16 & \textbf{2} & 191 & 0.00 & 35 & \textbf{2} & 191 & 0.00 & 19 & 2 & 191 & 0.00 & 11 & 191 & 0.00 \\ 
		0.8 & 50 & 4 & 5 & 196 & 0.00 & 127 & 13 & 196 & 0.00 & 67 & \textbf{4} & 196 & 0.00 & 222 & \textbf{4} & 196 & 0.00 & 62 & 2 & 196 & 0.00 & 12 & 196 & 0.00 \\ 
		0.8 & 50 & 5 & \textbf{5} & 192 & 0.00 & 176 & 15 & 192 & 0.00 & 81 & 6 & 192 & 0.00 & 402 & 7 & 192 & 0.00 & 77 & 2 & 192 & 0.00 & 15 & 192 & 0.00 \\ 
		\bottomrule
	\end{tabular}
	\endgroup
\end{table}
% latex table generated in R 3.4.4 by xtable 1.8-3 package
% Fri Sep 27 16:29:50 2019
\begin{table}[ht]
	\centering
	\caption{Comparison of our approaches on instance set \NEWINST\ with $|V|$=100. \label{ta:our100}} 
	\begingroup\footnotesize
	\begin{tabular}{rrr|rrrrrrrrrrrrrrrr|rrrrrr}
		\toprule
		\multicolumn{3}{c}{instance} & \multicolumn{4}{c}{\ffirst} & \multicolumn{4}{c}{\ffirst+} & \multicolumn{4}{c}{\fsecond} & \multicolumn{4}{c}{\fsecond+} & \multicolumn{3}{c}{GRASP} & \multicolumn{3}{c}{GA} \\ $p$ & $c_u$ & $id$ & $t[s]$ & $w^B$ & g[\%] &$\#nN$ &  $t[s]$ & $w^B$ & g[\%]&$\#nN$ &  $t[s]$ & $w^B$ & g[\%]& $\#nN$ & $t[s]$ & $w^B$ & g[\%]& $\#nN$ & $t[s]$ & $w^B$ & pg[\%]  & $t[s]$ & $w^B$ & pg[\%] \\ \midrule
		0.2 & 10 & 1 & TL & 931 & 23.09 & 150968 & 373 & 873 &  0.00 & 17695 & TL & 914 & 16.03 & 488396 & \textbf{319} & 873 & 0.00 & 28266 & 1 & 930 & 6.53 & 12 & 873 & 0.00 \\ 
		0.2 & 10 & 2 & TL & 991 & 20.67 & 195361 & 348 & 944 &  0.00 & 16097 & TL & 966 & 13.96 & 537600 & \textbf{261} & 944 & 0.00 & 21216 & 1 & 983 & 4.13 & 13 & 944 & 0.00 \\ 
		0.2 & 10 & 3 & TL & 933 & 21.97 & 175869 & 509 & 878 &  0.00 & 24488 & TL & 937 & 17.98 & 488300 & \textbf{389} & 878 & 0.00 & 32623 & 1 & 905 & 3.08 & 11 & 878 & 0.00 \\ 
		0.2 & 10 & 4 & TL & 837 & 19.00 & 160900 & 775 & 837 &  0.00 & 36727 & TL & 850 & 14.18 & 533800 & \textbf{546} & 837 & 0.00 & 44398 & 1 & 879 & 5.02 & 11 & 837 & 0.00 \\ 
		0.2 & 10 & 5 & TL & 913 & 23.99 & 166974 & 412 & 840 &  0.00 & 19910 & TL & 847 & 12.08 & 455300 & \textbf{332} & 840 & 0.00 & 31130 & 1 & 907 & 7.98 & 12 & 870 & 3.57 \\ 
		0.2 & 25 & 1 & 660 & 591 &  0.00 & 52158 & 82 & 591 &  0.00 & 3688 & 401 & 591 &  0.00 & 97432 & \textbf{55} & 591 & 0.00 & 5421 & 1 & 591 & 0.00 & 12 & 591 & 0.00 \\ 
		0.2 & 25 & 2 & TL & 655 &  3.93 & 154809 & 94 & 653 &  0.00 & 4420 & 1126 & 653 &  0.00 & 279737 & \textbf{67} & 653 & 0.00 & 7342 & 1 & 687 & 5.21 & 11 & 655 & 0.31 \\ 
		0.2 & 25 & 3 & 769 & 612 &  0.00 & 64664 & 61 & 612 &  0.00 & 2355 & 251 & 612 &  0.00 & 56158 & \textbf{34} & 612 & 0.00 & 3142 & 1 & 648 & 5.88 & 12 & 616 & 0.65 \\ 
		0.2 & 25 & 4 & TL & 558 &  2.87 & 122658 & 38 & 552 &  0.00 & 917 & 224 & 552 &  0.00 & 53608 & \textbf{22} & 552 & 0.00 & 1716 & 1 & 602 & 9.06 & 11 & 552 & 0.00 \\ 
		0.2 & 25 & 5 & TL & 606 &  6.27 & 172508 & 506 & 606 &  0.00 & 31561 & TL & 609 &  4.40 & 401219 & \textbf{345} & 606 & 0.00 & 40428 & 1 & 646 & 6.60 & 12 & 607 & 0.17 \\ 
		0.2 & 50 & 1 & 11 & 418 &  0.00 & 929 & 19 & 418 &  0.00 & 193 & 8 & 418 &  0.00 & 1247 & \textbf{5} & 418 & 0.00 & 249 & 1 & 422 & 0.96 & 12 & 420 & 0.48 \\ 
		0.2 & 50 & 2 & 9 & 447 &  0.00 & 774 & 18 & 447 &  0.00 & 177 & 8 & 447 &  0.00 & 1154 & \textbf{7} & 447 & 0.00 & 261 & 1 & 472 & 5.59 & 11 & 456 & 2.01 \\ 
		0.2 & 50 & 3 & 5 & 419 &  0.00 & 339 & 17 & 419 &  0.00 & 106 & 6 & 419 &  0.00 & 590 & \textbf{4} & 419 & 0.00 & 124 & 1 & 427 & 1.91 & 11 & 419 & 0.00 \\ 
		0.2 & 50 & 4 & 74 & 403 &  0.00 & 4679 & 21 & 403 &  0.00 & 329 & 19 & 403 &  0.00 & 3709 & \textbf{10} & 403 & 0.00 & 395 & 1 & 418 & 3.72 & 12 & 410 & 1.74 \\ 
		0.2 & 50 & 5 & 12 & 375 &  0.00 & 800 & 20 & 375 &  0.00 & 290 & 10 & 375 &  0.00 & 1663 & \textbf{5} & 375 & 0.00 & 328 & 1 & 379 & 1.07 & 13 & 379 & 1.07 \\ 
		0.5 & 10 & 1 & TL & 763 & 18.61 & 104422 & TL & 743 &  7.09 & 26400 & TL & 743 &  7.32 & 240277 & \textbf{1421} & 743 & 0.00 & 61206 & 2 & 749 & 0.81 & 26 & 749 & 0.81 \\ 
		0.5 & 10 & 2 & TL & 708 & 28.16 & 112961 & 1207 & 698 &  0.00 & 15696 & 1357 & 698 &  0.00 & 226826 & \textbf{670} & 698 & 0.00 & 25318 & 3 & 705 & 1.00 & 25 & 700 & 0.29 \\ 
		0.5 & 10 & 3 & TL & 728 & 16.48 & 103135 & 1323 & 699 &  0.00 & 19211 & TL & 699 &  3.00 & 291119 & \textbf{742} & 699 & 0.00 & 29592 & 3 & 730 & 4.43 & 24 & 718 & 2.72 \\ 
		0.5 & 10 & 4 & TL & 726 & 13.57 & 107121 & 1088 & 726 &  0.00 & 13761 & 1324 & 726 &  0.00 & 218790 & \textbf{609} & 726 & 0.00 & 22324 & 2 & 775 & 6.75 & 26 & 726 & 0.00 \\ 
		0.5 & 10 & 5 & TL & 761 & 24.11 & 124466 & TL & 702 &  1.37 & 24691 & TL & 744 & 17.25 & 240100 & \textbf{1275} & 702 & 0.00 & 51404 & 2 & 743 & 5.84 & 25 & 702 & 0.00 \\ 
		0.5 & 25 & 1 & 670 & 461 &  0.00 & 18182 & 235 & 461 &  0.00 & 2640 & 182 & 461 &  0.00 & 22792 & \textbf{99} & 461 & 0.00 & 3913 & 3 & 461 & 0.00 & 25 & 461 & 0.00 \\ 
		0.5 & 25 & 2 & 230 & 437 &  0.00 & 6776 & 178 & 437 &  0.00 & 2454 & 115 & 437 &  0.00 & 12140 & \textbf{76} & 437 & 0.00 & 3372 & 2 & 448 & 2.52 & 19 & 437 & 0.00 \\ 
		0.5 & 25 & 3 & 404 & 434 &  0.00 & 10685 & 263 & 434 &  0.00 & 3821 & 155 & 434 &  0.00 & 16969 & \textbf{111} & 434 & 0.00 & 4425 & 3 & 443 & 2.07 & 22 & 434 & 0.00 \\ 
		0.5 & 25 & 4 & TL & 494 &  8.20 & 63896 & 921 & 482 &  0.00 & 16949 & 621 & 482 &  0.00 & 90411 & \textbf{533} & 482 & 0.00 & 22037 & 2 & 489 & 1.45 & 25 & 482 & 0.00 \\ 
		0.5 & 25 & 5 & 1395 & 456 &  0.00 & 40173 & 829 & 456 &  0.00 & 12615 & 430 & 456 &  0.00 & 59506 & \textbf{358} & 456 & 0.00 & 16885 & 3 & 470 & 3.07 & 23 & 457 & 0.22 \\ 
		0.5 & 50 & 1 & 4 & 260 &  0.00 & 27 & 17 & 260 &  0.00 & 5 & 5 & 260 &  0.00 & 131 & \textbf{3} & 260 & 0.00 & 5 & 2 & 260 & 0.00 & 22 & 260 & 0.00 \\ 
		0.5 & 50 & 2 & 3 & 271 &  0.00 & 27 & 17 & 271 &  0.00 & 15 & 3 & 271 &  0.00 & 55 & \textbf{2} & 271 & 0.00 & 14 & 2 & 271 & 0.00 & 21 & 271 & 0.00 \\ 
		0.5 & 50 & 3 & 9 & 283 &  0.00 & 282 & 21 & 283 &  0.00 & 119 & 7 & 283 &  0.00 & 404 & \textbf{4} & 283 & 0.00 & 135 & 3 & 283 & 0.00 & 21 & 283 & 0.00 \\ 
		0.5 & 50 & 4 & 27 & 291 &  0.00 & 914 & 39 & 291 &  0.00 & 353 & 11 & 291 &  0.00 & 1070 & \textbf{10} & 291 & 0.00 & 355 & 2 & 296 & 1.72 & 22 & 291 & 0.00 \\ 
		0.5 & 50 & 5 & 12 & 269 &  0.00 & 347 & 29 & 269 &  0.00 & 228 & 14 & 269 &  0.00 & 1254 & \textbf{9} & 269 & 0.00 & 251 & 2 & 269 & 0.00 & 21 & 269 & 0.00 \\ 
		0.8 & 10 & 1 & TL & 730 & 20.71 & 55600 & TL & 730 & 15.57 & 11878 & TL & 730 & \textbf{ 3.22} & 176962 & TL & 730 & 8.77 & 30496 & 4 & 730 & 0.00 & 39 & 730 & 0.00 \\ 
		0.8 & 10 & 2 & TL & 697 & 15.18 & 40114 & TL & 683 & 11.61 & 5773 & 1064 & 683 &  0.00 & 103180 & \textbf{1025} & 683 & 0.00 & 17483 & 4 & 688 & 0.73 & 37 & 683 & 0.00 \\ 
		0.8 & 10 & 3 & TL & 721 & 19.24 & 47870 & TL & 718 & 11.53 & 10506 & \textbf{1636} & 718 &  0.00 & 154346 & 1769 & 718 & 0.00 & 31269 & 4 & 718 & 0.00 & 37 & 718 & 0.00 \\ 
		0.8 & 10 & 4 & TL & 726 & 36.02 & 52165 & TL & 709 &  8.75 & 9566 & TL & 712 &  6.60 & 153824 & \textbf{1452} & 709 & 0.00 & 28487 & 4 & 709 & 0.00 & 41 & 709 & 0.00 \\ 
		0.8 & 10 & 5 & TL & 703 & 18.92 & 43898 & TL & 700 & 18.08 & 7205 & \textbf{1221} & 700 &  0.00 & 118429 & TL & 700 & 3.55 & 28770 & 4 & 710 & 1.43 & 39 & 704 & 0.57 \\ 
		0.8 & 25 & 1 & 1138 & 442 &  0.00 & 15789 & 1125 & 442 &  0.00 & 7017 & \textbf{396} & 442 &  0.00 & 34791 & 459 & 442 & 0.00 & 9633 & 5 & 452 & 2.26 & 40 & 442 & 0.00 \\ 
		0.8 & 25 & 2 & 1068 & 430 &  0.00 & 15155 & 693 & 430 &  0.00 & 4218 & \textbf{277} & 430 &  0.00 & 27907 & 285 & 430 & 0.00 & 5284 & 4 & 430 & 0.00 & 32 & 430 & 0.00 \\ 
		0.8 & 25 & 3 & 984 & 426 &  0.00 & 15999 & 669 & 426 &  0.00 & 4180 & \textbf{251} & 426 &  0.00 & 19709 & 269 & 426 & 0.00 & 5152 & 4 & 426 & 0.00 & 36 & 426 & 0.00 \\ 
		0.8 & 25 & 4 & 1045 & 428 &  0.00 & 17287 & 891 & 428 &  0.00 & 5285 & \textbf{277} & 428 &  0.00 & 27607 & 390 & 428 & 0.00 & 7049 & 4 & 428 & 0.00 & 35 & 428 & 0.00 \\ 
		0.8 & 25 & 5 & TL & 447 &  9.51 & 26364 & 1375 & 432 &  0.00 & 8047 & \textbf{520} & 432 &  0.00 & 51363 & 578 & 432 & 0.00 & 10357 & 4 & 432 & 0.00 & 42 & 432 & 0.00 \\ 
		0.8 & 50 & 1 & 33 & 259 &  0.00 & 993 & 75 & 259 &  0.00 & 396 & \textbf{16} & 259 &  0.00 & 1415 & 21 & 259 & 0.00 & 427 & 4 & 259 & 0.00 & 32 & 259 & 0.00 \\ 
		0.8 & 50 & 2 & 6 & 246 &  0.00 & 41 & 25 & 246 &  0.00 & 33 & \textbf{5} & 246 &  0.00 & 96 & 6 & 246 & 0.00 & 44 & 4 & 246 & 0.00 & 9 & 246 & 0.00 \\ 
		0.8 & 50 & 3 & 9 & 238 &  0.00 & 106 & 26 & 238 &  0.00 & 31 & \textbf{5} & 238 &  0.00 & 154 & \textbf{5} & 238 & 0.00 & 39 & 4 & 238 & 0.00 & 34 & 238 & 0.00 \\ 
		0.8 & 50 & 4 & 28 & 253 &  0.00 & 673 & 56 & 253 &  0.00 & 210 & \textbf{14} & 253 &  0.00 & 757 & 16 & 253 & 0.00 & 232 & 4 & 258 & 1.98 & 34 & 253 & 0.00 \\ 
		0.8 & 50 & 5 & 39 & 248 &  0.00 & 1042 & 81 & 248 &  0.00 & 414 & \textbf{18} & 248 &  0.00 & 1428 & 25 & 248 & 0.00 & 451 & 5 & 250 & 0.81 & 31 & 248 & 0.00 \\ 
		\bottomrule
	\end{tabular}
	\endgroup
\end{table}
% latex table generated in R 3.4.4 by xtable 1.8-3 package
% Fri Sep 27 16:29:50 2019
\begin{table}[ht]
	\centering
	\caption{Comparison of our approaches on instance set \NEWINST\ with $|V|$=125. \label{ta:our125}} 
	\begingroup\footnotesize
	\begin{tabular}{rrr|rrrrrrrrrrrrrrrr|rrrrrr}
		\toprule
		\multicolumn{3}{c}{instance} & \multicolumn{4}{c}{\ffirst} & \multicolumn{4}{c}{\ffirst+} & \multicolumn{4}{c}{\fsecond} & \multicolumn{4}{c}{\fsecond+} & \multicolumn{3}{c}{GRASP} & \multicolumn{3}{c}{GA} \\ $p$ & $c_u$ & $id$ & $t[s]$ & $w^B$ & g[\%] &$\#nN$ &  $t[s]$ & $w^B$ & g[\%]&$\#nN$ &  $t[s]$ & $w^B$ & g[\%]& $\#nN$ & $t[s]$ & $w^B$ & g[\%]& $\#nN$ & $t[s]$ & $w^B$ & pg[\%]  & $t[s]$ & $w^B$ & pg[\%] \\ \midrule
		0.2 & 10 & 1 & TL & 1031 & 29.33 & 111233 & TL & 1031 & 13.08 & 38900 & TL & 1122 & 33.07 & 289800 & TL & 1026 & \textbf{11.31} & 66500 & 2 & 1112 & 8.38 & 24 & 1026 & 0.00 \\ 
		0.2 & 10 & 2 & TL & 1038 & 25.41 & 103199 & TL & 1038 &  5.43 & 39364 & TL & 1136 & 29.30 & 323400 & TL & 1038 & \textbf{ 4.11} & 70700 & 2 & 1069 & 2.99 & 22 & 1038 & 0.00 \\ 
		0.2 & 10 & 3 & TL & 935 & 21.55 & 112721 & 1065 & 935 &  0.00 & 18545 & TL & 1006 & 23.01 & 307900 & \textbf{610} & 935 &  0.00 & 23794 & 2 & 1124 & 20.21 & 23 & 947 & 1.28 \\ 
		0.2 & 10 & 4 & TL & 1087 & 32.38 & 162826 & TL & 1050 & 11.10 & 35800 & TL & 1102 & 30.22 & 307500 & TL & 1052 & \textbf{10.55} & 65400 & 2 & 1121 & 6.76 & 21 & 1051 & 0.10 \\ 
		0.2 & 10 & 5 & TL & 1067 & 38.06 & 98022 & TL & 978 & 12.12 & 46300 & TL & 1069 & 32.41 & 293644 & TL & 974 & \textbf{10.88} & 72100 & 2 & 1112 & 14.17 & 25 & 975 & 0.10 \\ 
		0.2 & 25 & 1 & TL & 752 & 14.43 & 79324 & 727 & 720 &  0.00 & 20101 & TL & 777 & 15.13 & 249000 & \textbf{484} & 720 &  0.00 & 30681 & 2 & 803 & 11.53 & 26 & 720 & 0.00 \\ 
		0.2 & 25 & 2 & TL & 748 &  9.42 & 115536 & 1690 & 746 &  0.00 & 49679 & TL & 755 &  9.34 & 250400 & \textbf{1038} & 746 &  0.00 & 66326 & 2 & 768 & 2.95 & 24 & 748 & 0.27 \\ 
		0.2 & 25 & 3 & TL & 758 & 17.24 & 76593 & 1308 & 715 &  0.00 & 32387 & TL & 756 & 13.82 & 262200 & \textbf{802} & 715 &  0.00 & 58391 & 2 & 752 & 5.17 & 21 & 717 & 0.28 \\ 
		0.2 & 25 & 4 & TL & 725 & 13.52 & 125557 & TL & 701 &  1.13 & 45548 & TL & 726 & 11.79 & 277800 & \textbf{1195} & 701 &  0.00 & 68666 & 2 & 726 & 3.57 & 22 & 705 & 0.57 \\ 
		0.2 & 25 & 5 & TL & 690 & 12.15 & 125451 & TL & 684 &  3.49 & 48278 & TL & 714 & 14.62 & 284000 & \textbf{1548} & 684 &  0.00 & 94996 & 2 & 747 & 9.21 & 23 & 697 & 1.90 \\ 
		0.2 & 50 & 1 & 22 & 455 &  0.00 & 914 & 19 & 455 &  0.00 & 94 & 14 & 455 &  0.00 & 1809 & \textbf{3} & 455 &  0.00 & 112 & 2 & 457 & 0.44 & 21 & 455 & 0.00 \\ 
		0.2 & 50 & 2 & 15 & 477 &  0.00 & 552 & 22 & 477 &  0.00 & 163 & 11 & 477 &  0.00 & 1216 & \textbf{4} & 477 &  0.00 & 153 & 2 & 493 & 3.35 & 23 & 477 & 0.00 \\ 
		0.2 & 50 & 3 & 150 & 490 &  0.00 & 5438 & 33 & 490 &  0.00 & 379 & 32 & 490 &  0.00 & 4963 & \textbf{9} & 490 &  0.00 & 446 & 2 & 501 & 2.24 & 21 & 490 & 0.00 \\ 
		0.2 & 50 & 4 & 307 & 467 &  0.00 & 10476 & 36 & 467 &  0.00 & 678 & 63 & 467 &  0.00 & 11763 & \textbf{14} & 467 &  0.00 & 903 & 2 & 504 & 7.92 & 23 & 467 & 0.00 \\ 
		0.2 & 50 & 5 & 680 & 457 &  0.00 & 27859 & 71 & 457 &  0.00 & 1974 & 74 & 457 &  0.00 & 12890 & \textbf{29} & 457 &  0.00 & 2719 & 2 & 468 & 2.41 & 24 & 459 & 0.44 \\ 
		0.5 & 10 & 1 & TL & 888 & 35.77 & 74241 & TL & 817 & 19.35 & 11969 & TL & 920 & 32.99 & 189400 & TL & 817 & \textbf{15.90} & 32310 & 4 & 817 & 0.00 & 41 & 817 & 0.00 \\ 
		0.5 & 10 & 2 & TL & 838 & 27.80 & 71722 & TL & 815 & 18.60 & 11600 & TL & 902 & 35.97 & 165500 & TL & 815 & \textbf{14.33} & 28242 & 5 & 827 & 1.47 & 45 & 815 & 0.00 \\ 
		0.5 & 10 & 3 & TL & 931 & 48.44 & 71111 & TL & 836 & 21.04 & 12000 & TL & 915 & 32.01 & 183200 & TL & 836 & \textbf{18.68} & 31000 & 4 & 880 & 5.26 & 45 & 872 & 4.31 \\ 
		0.5 & 10 & 4 & TL & 912 & 36.32 & 81756 & TL & 867 & 23.60 & 12100 & ML & 947 & 34.16 & 194451 & TL & 867 & \textbf{20.84} & 28328 & 4 & 914 & 5.42 & 55 & 867 & 0.00 \\ 
		0.5 & 10 & 5 & TL & 949 & 39.97 & 76935 & TL & 867 & 25.04 & 12998 & TL & 995 & 41.73 & 188520 & TL & 867 & \textbf{22.20} & 30407 & 5 & 906 & 4.50 & 55 & 867 & 0.00 \\ 
		0.5 & 25 & 1 & TL & 613 & 21.13 & 37273 & TL & 566 &  9.75 & 13891 & TL & 566 & \textbf{ 4.27} & 160389 & TL & 566 &  4.91 & 37868 & 5 & 566 & 0.00 & 48 & 566 & 0.00 \\ 
		0.5 & 25 & 2 & TL & 542 &  9.87 & 30162 & TL & 533 &  2.02 & 14217 & 915 & 533 &  0.00 & 78306 & \textbf{900} & 533 &  0.00 & 24650 & 5 & 561 & 5.25 & 48 & 533 & 0.00 \\ 
		0.5 & 25 & 3 & TL & 563 & 13.77 & 30148 & TL & 538 &  2.16 & 12186 & 1417 & 538 &  0.00 & 111362 & \textbf{835} & 538 &  0.00 & 19259 & 5 & 567 & 5.39 & 49 & 538 & 0.00 \\ 
		0.5 & 25 & 4 & TL & 567 & 18.54 & 37033 & TL & 552 & 16.40 & 10610 & TL & 576 & 15.71 & 149600 & TL & 552 & \textbf{10.66} & 37400 & 4 & 565 & 2.36 & 53 & 552 & 0.00 \\ 
		0.5 & 25 & 5 & TL & 572 & 19.52 & 38695 & TL & 545 & 12.36 & 15193 & TL & 552 & \textbf{ 8.51} & 148100 & TL & 545 &  8.67 & 40091 & 5 & 548 & 0.55 & 48 & 548 & 0.55 \\ 
		0.5 & 50 & 1 & 40 & 334 &  0.00 & 785 & 64 & 334 &  0.00 & 473 & 19 & 334 &  0.00 & 1495 & \textbf{16} & 334 &  0.00 & 481 & 4 & 336 & 0.60 & 40 & 334 & 0.00 \\ 
		0.5 & 50 & 2 & 19 & 330 &  0.00 & 500 & 41 & 330 &  0.00 & 251 & 13 & 330 &  0.00 & 631 & \textbf{12} & 330 &  0.00 & 255 & 4 & 330 & 0.00 & 38 & 330 & 0.00 \\ 
		0.5 & 50 & 3 & 20 & 315 &  0.00 & 247 & 39 & 315 &  0.00 & 80 & 9 & 315 &  0.00 & 219 & \textbf{7} & 315 &  0.00 & 77 & 5 & 315 & 0.00 & 49 & 315 & 0.00 \\ 
		0.5 & 50 & 4 & 57 & 316 &  0.00 & 834 & 88 & 316 &  0.00 & 488 & 33 & 316 &  0.00 & 2657 & \textbf{21} & 316 &  0.00 & 504 & 5 & 316 & 0.00 & 51 & 316 & 0.00 \\ 
		0.5 & 50 & 5 & 104 & 311 &  0.00 & 2479 & 113 & 311 &  0.00 & 1099 & 33 & 311 &  0.00 & 3111 & \textbf{32} & 311 &  0.00 & 1107 & 4 & 311 & 0.00 & 40 & 311 & 0.00 \\ 
		0.8 & 10 & 1 & TL & 855 & 53.04 & 33869 & TL & 793 & 18.91 & 4892 & TL & 855 & 32.69 & 123500 & TL & 793 & \textbf{17.38} & 15086 & 9 & 793 & 0.00 & 78 & 793 & 0.00 \\ 
		0.8 & 10 & 2 & TL & 913 & 44.80 & 35253 & TL & 853 & 26.18 & 6445 & TL & 899 & 36.83 & 117000 & TL & 845 & \textbf{25.59} & 17975 & 8 & 854 & 1.07 & 72 & 845 & 0.00 \\ 
		0.8 & 10 & 3 & TL & 885 & 42.29 & 34230 & TL & 787 & 18.71 & 4916 & TL & 841 & 26.83 & 107600 & TL & 787 & \textbf{17.29} & 16300 & 9 & 829 & 5.34 & 74 & 787 & 0.00 \\ 
		0.8 & 10 & 4 & TL & 853 & 55.10 & 34257 & TL & 777 & 17.46 & 4700 & TL & 830 & 31.51 & 109100 & TL & 777 & \textbf{16.52} & 15100 & 9 & 829 & 6.69 & 83 & 777 & 0.00 \\ 
		0.8 & 10 & 5 & TL & 865 & 58.98 & 34289 & TL & 820 & 23.57 & 5188 & TL & 904 & 39.57 & 114502 & TL & 813 & \textbf{23.00} & 16200 & 8 & 827 & 1.72 & 77 & 813 & 0.00 \\ 
		0.8 & 25 & 1 & TL & 514 & 18.09 & 18822 & TL & 508 & 12.79 & 6100 & \textbf{1555} & 508 &  0.00 & 100191 & TL & 508 &  7.23 & 16220 & 9 & 521 & 2.56 & 69 & 510 & 0.39 \\ 
		0.8 & 25 & 2 & TL & 504 & 17.32 & 13406 & TL & 498 & 10.76 & 6000 & \textbf{1158} & 498 &  0.00 & 75456 & 1656 & 498 &  0.00 & 16200 & 9 & 499 & 0.20 & 65 & 498 & 0.00 \\ 
		0.8 & 25 & 3 & TL & 533 & 20.87 & 17604 & TL & 513 & 12.87 & 5558 & TL & 550 & 15.73 & 107800 & TL & 513 & \textbf{ 5.83} & 16743 & 9 & 523 & 1.95 & 77 & 513 & 0.00 \\ 
		0.8 & 25 & 4 & TL & 505 & 17.77 & 19977 & TL & 493 & 11.21 & 5241 & \textbf{1424} & 493 &  0.00 & 92315 & TL & 493 &  0.39 & 17238 & 8 & 506 & 2.64 & 75 & 493 & 0.00 \\ 
		0.8 & 25 & 5 & TL & 515 & 22.38 & 16373 & TL & 504 & 16.65 & 7000 & TL & 528 & 18.21 & 114500 & TL & 504 & \textbf{14.02} & 19469 & 8 & 519 & 2.98 & 76 & 504 & 0.00 \\ 
		0.8 & 50 & 1 & 511 & 307 &  0.00 & 4416 & 355 & 307 &  0.00 & 1499 & \textbf{49} & 307 &  0.00 & 2868 & 92 & 307 &  0.00 & 1568 & 8 & 307 & 0.00 & 64 & 307 & 0.00 \\ 
		0.8 & 50 & 2 & 58 & 296 &  0.00 & 897 & 130 & 296 &  0.00 & 360 & \textbf{24} & 296 &  0.00 & 1484 & 32 & 296 &  0.00 & 397 & 8 & 296 & 0.00 & 57 & 296 & 0.00 \\ 
		0.8 & 50 & 3 & 125 & 294 &  0.00 & 1888 & 141 & 294 &  0.00 & 316 & \textbf{30} & 294 &  0.00 & 1655 & 33 & 294 &  0.00 & 351 & 8 & 294 & 0.00 & 71 & 294 & 0.00 \\ 
		0.8 & 50 & 4 & 82 & 270 &  0.00 & 974 & 72 & 270 &  0.00 & 105 & 35 & 270 &  0.00 & 1747 & \textbf{15} & 270 &  0.00 & 116 & 9 & 270 & 0.00 & 86 & 270 & 0.00 \\ 
		0.8 & 50 & 5 & 89 & 278 &  0.00 & 1326 & 206 & 278 &  0.00 & 659 & \textbf{46} & 278 &  0.00 & 2611 & 58 & 278 &  0.00 & 744 & 9 & 278 & 0.00 & 77 & 278 & 0.00 \\ 
		\bottomrule
	\end{tabular}
	\endgroup
\end{table}

\end{landscape}
\clearpage

\section{Conclusions and future work \label{sec:con}}

In this paper, we presented exact and heuristic solution algorithms for the recently introduced \emph{(minimum) weighted total domination problem (\WTDP)} (see~\cite{MaEtAl2019}). 
The \WTDP\ is a problem from the family of domination problems, which are among the most basic combinatinatorial problems in graph optimization. 
In the \WTDP\ we are not just concerned with the concept of domination (i.e., finding a vertex-set $D\subset V$ for a given graph $G=(V,E)$, such that each vertex is either in $D$ or adjacent to it), 
but with the stronger concept of \emph{total domination}, which imposes that for each vertex $v \in D$, there is also a neighbor of $v$ in $D$ (i.e., the vertices of $D$ also need to be dominated by $D$). 
In the \WTDP, we have a weight function associated with the vertices and edges of the graph. 
The goal is to find a total dominating set $D$ with minimal weight. 
The weight counted in the objective is the weight of the vertices selected for $D$, the weight of the edges between vertices in $D$, and for each vertex in $V\setminus D$, 
the smallest weight of an edge between it and a vertex in $D$. 

We introduced two new Mixed-Integer Programming models for the problem, and designed solution frameworks based on them. These solution frameworks include valid inequalities, starting heuristics and primal heuristics. 
In addition, we also developed a genetic algorithm (GA), which is based on a greedy randomized adaptive search procedure (GRASP) version of our starting heuristic. 

In a computational study, we compared our new exact approaches to the previous MIP approached presented in~\cite{MaEtAl2019} and also analyzed the performance of the GRASP and GA. 
The study revealed that our exact solution algorithms are up to 500 times faster compared to the exact approaches of~\cite{MaEtAl2019} and instances with up to 125 vertices can be solved to optimality within a timelimit of 1800 seconds. 
Moreover, the GRASP and GA also works well and often find the optimal or a near-optimal solution within a short runtime. 
In the study, we also investigated the influence of different instance-characteristics, e.g., density and weight-structure on the performance of our approaches. Instances, where the edge weights are in a larger range compared to the vertex weights turned out to be the most difficult for our algorithms, while high density also plays a role in making instances difficult.

The attained results confirm that domination problems are computationally challenging and, therefore, require the combined effort of MIP-based and heuristic approaches in order to tackle more difficult instances.
Therefore, we believe that the development of further modeling and algorithmic advances for domination problem variants is an interesting venue for future work as these problems are relevant both from the methodological and practical point of view.

\paragraph{Acknowledgments} E. \'Alvarez-Miranda acknowledges  the  support  of  the  National Commission for Scientific and Technological Research CONICYT, Chile,  through  the  grant  FONDECYT  N.1180670  and  through  the  Complex  Engineering Systems Institute
PIA/BASAL AFB180003.

% The research of M.S. was supported by the Austrian Research Fund (FWF, Project P 26755-N19).

\section*{References}

\bibliographystyle{plainnat}
\bibliography{WTDP-bib}

\end{document}